\documentclass[onefignum,onetabnum]{siamart251216}

\usepackage{amssymb}
\usepackage{mathtools}
\usepackage{graphicx}

\DeclareMathOperator{\circum}{circ}
\DeclareMathOperator{\aff}{aff}
\DeclareMathOperator{\cone}{cone}
\DeclareMathOperator{\conv}{conv}
\DeclareMathOperator{\closu}{cl}

\DeclareMathOperator{\inte}{int}
\DeclareMathOperator{\dom}{dom}
\DeclareMathOperator{\proj}{P}
\DeclareMathOperator{\Tr}{tr}
\DeclareMathOperator*{\argmin}{arg\,min}
\DeclareMathOperator*{\argmax}{arg\,max}

\IfFileExists{bbm.sty}{\usepackage{bbm}}{\providecommand{\mathbbm}{\mathbb}}
\newcommand{\RR}{\mathbbm{R}}

\newcommand{\Smat}{\mathbb{S}}
\newcommand{\KK}{\mathcal{K}}
\newcommand{\Hcal}{\mathcal{H}}
\newcommand{\Tcone}{\mathcal{T}}
\newcommand{\Kpolar}{\KK^{\circ}}
\newcommand{\norm}[1]{\left\lVert #1 \right\rVert}
\newcommand{\scal}[2]{\left\langle #1, #2 \right\rangle}
\newcommand{\one}{\mathbf{1}}

\usepackage{tikz}
\usetikzlibrary{arrows.meta,calc,patterns}
\usepackage{enumitem}
\usepackage{caption}

\newsiamthm{example}{Example}
\newsiamremark{remark}{Remark}

\headers{Geometry of circumcentric directions of cones}{Y.~Bello-Cruz}

\title{On the geometry of circumcentric directions of cones%
\thanks{\today.
}}

\author{Yunier Bello-Cruz
\thanks{Department of Mathematical Sciences, Northern Illinois University,
DeKalb, IL 60115, USA (\email{yunier.bello@niu.edu},
\url{https://orcid.org/0000-0002-7877-5688}).}
}

\begin{document}
\maketitle

\begin{abstract}
Behling, Bello-Cruz, Lara-Urdaneta, Oviedo, and Santos showed that
the circumcentric direction $d$ of a finitely generated polyhedral
cone $\KK\subset\RR^n$ admits an inscribed Euclidean ball of radius
$\norm{d}^2$ inside the polar cone $\Kpolar$. We sharpen this result
in several ways. The exact set of admissible perturbations is a
polyhedron, strictly larger than the inscribed ball off the
generators and unbounded along $\Kpolar$. From it we read off a
closed form for $\norm{d}^2$ in terms of the inverse Gram matrix of
the conic base, with two-sided spectral bounds, and an aperture
identity $\norm{d}=\cos\theta$ relating the generators to the axis
$-d/\norm{d}$. The inscribed-ball estimate extends to closed convex pointed cones
under one geometric condition: the normalized extremal section
$E_\KK$ has affine hull avoiding the origin. The admissible set is
then the intersection of half-spaces indexed by $E_\KK$, and the
inscribed ball touches its boundary along $\norm{d}^2\,\closu E_\KK$.
A Jordan-frame argument verifies the hypothesis for every simple
symmetric cone and gives $\norm{d}^2=1/r$ for the Jordan rank $r$;
the same value $1/n$ shows up for the doubly nonnegative cone, the
direct-product case obeys the parallel-resistance rule
$1/\norm{d}^2=\sum_\ell 1/\norm{d_\ell}^2$, and the $p$-cones with
$p\ne 2$ provide a clean obstruction. We close with a sharp formula
for the largest step from $d$ along a prescribed direction, worked
out for $L_\infty$-ball constrained least squares and second-order
cone programming; a piecewise smooth version where the inner Slater
condition is exactly Mangasarian--Fromovitz; and a Bregman analogue
covering a Mahalanobis instance and a mirror-descent step.
\end{abstract}

\begin{keywords}
circumcentric direction, polar cone, conic optimization,
symmetric cone, Bregman projection, second-order cone programming,
mirror descent, Jordan algebra
\end{keywords}

\begin{MSCcodes}
90C25, 90C46, 52A20, 47H05, 90C22
\end{MSCcodes}

\section{Introduction}\label{sec:intro}

Let $\KK\subset\RR^n$ be a finitely generated polyhedral cone with
normalized conic base $B_\KK=\{u^1,\ldots,u^p\}$, the unit vectors
along the (finitely many) extreme rays of~$\KK$. The
{circumcentric direction} of $B_\KK$ is $d:=-\circum(B_\KK)$, where
the circumcenter is uniquely determined
by~\cite[Prop.~3.3]{BauschkeOuyangWang:2018} as
\begin{align}\label{eq:circ-explicit}
\circum(B_\KK)
= u^1 + \alpha_1 (u^2 - u^1) + \cdots + \alpha_{p-1} (u^p - u^1),
\end{align}
with $(\alpha_1,\ldots,\alpha_{p-1}) \in \RR^{p-1}$ the unique
solution of the $(p-1)\times(p-1)$ linear system whose $i$-th
equation is
\begin{align}\label{eq:circ-system}
\sum_{j=1}^{p-1} \alpha_j
\langle u^{j+1} - u^1,\; u^{i+1} - u^1 \rangle
= \tfrac{1}{2} \|u^{i+1} - u^1\|^2,\qquad i=1,\ldots,p-1.
\end{align}
Behling, Bello-Cruz, Lara-Urdaneta, Oviedo, and
Santos~\cite{BehlingEtAl_CDC} proved that $d$ is caracterized as
\begin{align}\label{eq:dintro}
d=-\proj_{\aff(B_\KK)}(0).
\end{align}
Moreover, the inscribed-ball estimate
\begin{align}\label{eq:behball}
\norm{v}\le \norm{d}^2\implies d+v\in\Kpolar,
\end{align}
where $\Kpolar=\{w:\scal{w}{z}\le 0\text{ for all }z\in\KK\}$ is the
polar cone. The estimate is sharp on the sphere of radius
$\norm{d}^2$: equality holds at every generator $u^i$. The direction
$d$ thus provides a built-in feasible search direction inside
$\Kpolar$, available to projection-based methods and descent schemes
for conic constraints. The circumcenter itself has a longer history in the family of
{\em circumcentered-reflection methods} (CRM), initiated by Behling,
Bello-Cruz, and Santos~\cite{BBS:DR} to accelerate the
Douglas--Rachford method~\cite{BauschkeBelloCruzNghiaPhanWang:2014}
and the alternating-projection method~\cite{BauschkeBelloCruzNghiaPhanWang:2016}
in the affine setting. CRM has been developed extensively in
subsequent work; see, for
instance,~\cite{BBS:linconv,Behling:NA2021,BBS:block,ABBIS:rates,Behling:MP2024,Behling:SIOPT2024,Behling:COA2024,Barros:PPP2025,BC:cCRM2026}.
In all of these schemes the circumcenter is recomputed at each step
from auxiliary point sets that depend on the iterate. Here the
circumcenter is computed once, on the conic base $B_\KK$, and the
resulting $d$ is used as a fixed feasible direction inside
$\Kpolar$, no auxiliary points, no inner iteration.

The inscribed-ball estimate~\eqref{eq:behball} is one consequence of
\cref{eq:circ-system}, not its whole content. The set of $v$ for
which $d+v\in\Kpolar$ is itself a polyhedron,
\begin{align}\label{eq:Pintro}
\mathcal{P}=\bigl\{v\in\RR^n:\max_{1\le i\le p}\scal{v}{u^i}\le\norm{d}^2\bigr\},
\end{align}
and the Euclidean ball $\mathcal{B}=\{\norm{v}\le\norm{d}^2\}$ is
the largest ball it contains, but $\mathcal{P}$ is much larger:
$\mathcal{B}$ touches $\partial\mathcal{P}$ at exactly the points
$u^1,\ldots,u^p$ and sits strictly inside everywhere else, while
$\mathcal{P}$ stretches to infinity along $\Kpolar$. Replacing the
spherical perturbation by a prescribed direction $w$ yields a
closed-form maximum interior step typically much larger than
$\norm{d}^2$, and infinite for $w\in\Kpolar$.

The main contributions of this paper are as follows.

\noindent{\bf (C1)} An exact polyhedral description of $\{v:d+v\in\Kpolar\}$
together with a sharp formula for the directional depth
$\rho_\KK(w)=\sup\{t\ge 0:d+tw\in\Kpolar\}$
(\cref{prop:exact,cor:dirdepth}).

\noindent{\bf (C2)} A closed-form expression for both $d$ and $\norm{d}^2$
in terms of the inverse Gram matrix $M$ of $B_\KK$, with two-sided
spectral bounds in $\lambda_{\min}(M)/p$ and $\lambda_{\max}(M)/p$,
where $p=|B_\KK|$ is the number of generators (\cref{prop:gram}).

\noindent{\bf (C3)} An aperture interpretation: $\norm{d}$ is the cosine
of the common angle between every generator and the axis
$-d/\norm{d}$ (\cref{prop:aperture}). This gives an angular bound
on the directional depth $\rho_\KK(w)$ from~{\bf (C1)} that depends
only on $\norm{d}$ and on the angle of the direction $w$ to the axis
(\cref{cor:angular}).

\noindent{\bf (C4)} An extension of the inscribed-ball estimate to
closed convex pointed cones whose normalized extreme-ray set
$E_\KK$ has affine hull avoiding the origin, that is,
$0\notin\closu\aff(E_\KK)$ (\cref{thm:nonpoly}). The hypothesis
is shown to be equivalent to the existence of a constant-projection
witness (\cref{lem:hyp}) and is strictly stronger than pointedness
(\cref{ex:degen}). The second-order cone and the
positive-semidefinite cone are worked out explicitly
(\cref{ex:soc,ex:psd}), with figures
in~\cref{fig:soc,fig:degen}. This condition is a
strict-feasibility-type counterpart of Slater's condition; for the
broader theory of strict feasibility and facial reduction in conic
optimization, see Drusvyatskiy and Wolkowicz~\cite{DW:2017},
Pataki~\cite{Pataki:2017}, and Roshchina and
Tun\c{c}el~\cite{RT:2019}.

\noindent{\bf (C5)} A sharp polar description of the admissible set:
$\mathcal{P}_\KK=\bigcap_{u\in E_\KK}\{\scal{v}{u}\le\norm{d}^2\}$.
The closed ball $\bar B(0,\norm{d}^2)$ is the largest Euclidean ball
it contains, with contact set
$\partial\bar B(0,\norm{d}^2)\cap\partial\mathcal{P}_\KK=\norm{d}^2\,\closu E_\KK$
(\cref{thm:sharp,cor:dirdepth-np}). This pinpoints the directions
along which the inscribed-ball bound is attained and is new even
in the polyhedral case.

\noindent{\bf (C6)} A unified Jordan-algebraic verification of the
hypothesis $0\notin\closu\aff(E_\KK)$ for every simple symmetric
cone, yielding the closed-form value $\norm{d}^2=1/r$ in the Jordan
rank $r$ (\cref{prop:symmetric}). Three further results test the
reach of the hypothesis beyond the symmetric setting: stability
under direct products with the parallel-resistance formula
$1/\norm{d}^2=\sum_\ell 1/\norm{d_\ell}^2$ (\cref{prop:dirsum}); a
positive verification for the doubly nonnegative cone
$\mathrm{DNN}^n$, again with $\norm{d}^2=1/n$ (\cref{ex:dnn}); and
a clean obstruction at the family of $p$-cones for $p\ne 2$
(\cref{prop:pcone}). The orthant, the second-order cone, and the
positive-semidefinite cone end up on a single footing through the
Pierce decomposition of Faraut and Kor\'anyi~\cite{FK:1994}, and
the geometric content of the hypothesis emerges as planarity of the
extremal section.

\noindent{\bf (C7)} A sharp formula for the largest step along an
arbitrary prescribed direction that keeps $d+\sigma w$ inside
$\Kpolar$ (\cref{prop:step}); this is the step-length oracle of a
feasibility-corrected projected gradient update. A piecewise smooth
version of the active-cone construction
of~\cite[Cor.~2.8]{BehlingEtAl_CDC} is also given
(\cref{cor:piecewise}); the inner Slater condition there is exactly
the Mangasarian--Fromovitz qualification on the active pieces
(\cref{rem:mfcq}). Two concrete problem classes are worked out in
\Cref{sec:concrete}: $L_\infty$-ball constrained least squares
(piecewise smooth, polyhedral feasible set) yields the closed-form
oracle~\cref{eq:linfsigma} via \cref{cor:piecewise,prop:gram},
while second-order cone programming (smooth, non-polyhedral
feasible set) yields the parallel oracle~\cref{eq:socsigma} via
\cref{prop:step}, with explicit two-constraint
formula~\cref{eq:soctwosigma} interpolating between the orthogonal
and co-aligned limits. Both connect to first-order convergence
theory for constrained convex problems; see~\cite{NPR:2019,NNG:2019}.

\noindent{\bf (C8)} A Bregman extension of the construction
(\Cref{sec:bregman}). For a Legendre function $h$ with $\nabla h(0)=0$,
the Bregman projection $c_h$ of the origin onto $\aff(B_\KK)$
produces a dual identity $\scal{\nabla h(c_h)}{u^i}=\kappa_h$
uniformly in $i$ (\cref{lem:bregman-key}), from which a Bregman
inscribed-ball estimate $\norm{v}\le\kappa_h\Rightarrow d_h+v\in\Kpolar$
(\cref{thm:bregman-ball}) follows by the same Cauchy--Schwarz
argument. Beyond the rescaling family $h_p(x)=\tfrac1p\norm{x}^p$,
the Mahalanobis quadratic $h(x)=\tfrac12 x^\top A x$
(\cref{ex:maha}) gives a genuinely new direction $d_h\ne d$ with
margin $\kappa_h$ that may exceed the Euclidean $\norm{d}^2$. The
construction yields a Bregman feasibility-corrected
step~\cref{eq:bcfg} parallel to \cref{prop:step}, and is compared
to the Ouyang--Wang Bregman circumcenter of finitely many
points~\cite{Ouyang:2021,Ouyang:2023} in \cref{rem:ouyang}: the
two notions coincide in the Euclidean case when the point set is
$B_\KK$ but disagree under any non-Euclidean $h$, since
equidistance among finitely many points and single-projection
feasibility certification answer different questions.

The paper is organized as follows. \Cref{sec:setup} fixes notation,
collects the auxiliary results invoked in the proofs, and recalls
the basic identity. \Cref{sec:exact,sec:gram,sec:aperture} concern
the polyhedral case. \Cref{sec:nonpoly} extends the construction
beyond polyhedrality, proves the sharpness theorem, treats the
canonical symmetric-cone instances, and exhibits a pointed
counterexample. \Cref{sec:step,sec:piecewise} return to algorithmic
applications. \Cref{sec:concrete} works out two concrete problem
classes, $L_\infty$-ball constrained least squares and second-order
cone programming, with closed-form step-length oracles.
\Cref{sec:bregman} develops the Bregman analogue. \Cref{sec:outlook}
closes with directions for future work.

\section{Preliminaries and the basic identity}\label{sec:setup}

\subsection{Auxiliary results from the literature}\label{sec:prelim}

The proofs draw on a few classical results, which we collect here
for ease of reference. The Euclidean argument uses Cauchy--Schwarz
and orthogonal projection onto closed affine subspaces. Pointedness
of $\KK$ is equivalent, by Hahn--Banach, to the existence of
$\mu\in\RR^n$ with $\scal{x}{\mu}>0$ on $\KK\setminus\{0\}$, and the
resulting compact convex base $C=\{x\in\KK:\scal{x}{\mu}=1\}$
together with Krein--Milman gives the conic-hull representation
$\KK=\closu\cone(E_\KK)$~\cite{Rockafellar:1970,BC:book}; the
Hilbert-space version mentioned in \cref{rem:hilbert} replaces
Krein--Milman with the Choquet integral
representation~\cite{Phelps:2001}. The spectral bounds in
\cref{prop:gram} rest on the Rayleigh-quotient inequalities and
Weyl's monotonicity theorem~\cite{HornJohnson:2013}. The unified
treatment of symmetric cones in \cref{prop:symmetric} uses the
Pierce decomposition, the Jordan spectral theorem, and the
identification of extreme rays with primitive
idempotents~\cite{FK:1994}. The algorithmic sections invoke Slater's condition (which yields
$\Kpolar=\Tcone_\Omega(\bar x)$ through~\cite[Cor.~2.8]{BehlingEtAl_CDC},
where $\Tcone_\Omega(\bar x)=\{v\in\RR^n: \exists v_k\to v,\ t_k\downarrow 0,\ \bar x+t_kv_k\in\Omega\}$
is the (Bouligand) tangent cone), the Mangasarian--Fromovitz
constraint qualification~\cite{Mangasarian:1967}, and the convex
subdifferential of a maximum of finitely many smooth convex
functions,
$\partial(\max_i g_i)(\bar x)=\conv\{\nabla g_i(\bar x):g_i(\bar x)=\max_j g_j(\bar x)\}$~\cite{Clarke:1983}.
The Bregman material in~\Cref{sec:bregman} uses Legendre functions
in the sense of Rockafellar (proper, lower semicontinuous, convex,
essentially smooth and essentially strictly convex; see
\Cref{sec:bregman} for the precise conditions); the only
nonstandard fact invoked is strict monotonicity of $\nabla h$ on
$\inte\dom h$, that is, $\scal{\nabla h(x)-\nabla h(y)}{x-y}>0$
for distinct $x,y\in\inte\dom h$, which follows from essential
strict convexity~\cite{Rockafellar:1970,BC:book}.

\subsection{Setting and the basic identity}\label{sec:basic}

We follow the conventions of~\cite{BehlingEtAl_CDC}.

Throughout,
$\KK\subset\RR^n$ is a polyhedral cone with normalized
conic base $B_\KK=\{u^1,\ldots,u^p\}$, the $u^i$ being unit vectors
along the extreme rays of~$\KK$. The polar cone is
\begin{align}
\Kpolar=\{w\in\RR^n:\scal{w}{z}\le 0,\quad \forall z\in \KK\}=\{w\in\RR^n: \scal{w}{u^i}\le 0,\text{ }i=1,\ldots,p\},
\end{align}
and the circumcentric direction is, as in~\cref{eq:dintro},
\begin{align}\label{eq:circd}
d:=-\circum(B_\KK)=-\proj_{\aff(B_\KK)}(0).
\end{align}
Since $\circum(B_\KK)\in\aff(B_\KK)$ and $-d=\proj_{\aff(B_\KK)}(0)$
is orthogonal to the linear part of $\aff(B_\KK)$, the identity
$\scal{u^i-(-d)}{-d}=0$ holds for every $i$, equivalently
\cite[Lemma~2.5(ii)]{BehlingEtAl_CDC},
\begin{align}\label{eq:keyident}
\scal{d}{u^i}=-\norm{d}^2,\qquad i=1,\ldots,p.
\end{align}
Every generator has the same inner product $-\norm{d}^2$ with $d$.
The inscribed-ball theorem~\cref{eq:behball}
of~\cite[Thm.~2.6]{BehlingEtAl_CDC} follows in one line
from~\cref{eq:keyident} by Cauchy--Schwarz.

\section{The exact admissible set is a polyhedron}\label{sec:exact}

The inclusion $d+v\in\Kpolar$ amounts to $\scal{d+v}{u^i}\le 0$ for
every $i$, and $\scal{d}{u^i}$ is the constant $-\norm{d}^2$
by~\cref{eq:keyident}. Reading the resulting linear inequalities
off explicitly gives the exact admissible set.

\begin{proposition}[exact admissible polyhedron]\label{prop:exact}
For every $v\in\RR^n$,
\begin{align}
\label{eq:exact}
d+v\in\Kpolar &\iff \max_{1\le i\le p}\scal{v}{u^i}\le \norm{d}^2,\\
\label{eq:exact-int}
d+v\in\inte\Kpolar &\iff \max_{1\le i\le p}\scal{v}{u^i}<\norm{d}^2.
\end{align}
The polyhedron
$\mathcal{P}:=\{v\in\RR^n:\max_i\scal{v}{u^i}\le \norm{d}^2\}$ has
inscribed Euclidean ball, centered at the origin, of radius
exactly~$\norm{d}^2$.
\end{proposition}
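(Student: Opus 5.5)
The plan is to read everything off the polar description $\Kpolar=\{w:\scal{w}{u^i}\le 0,\ i=1,\ldots,p\}$ together with the basic identity~\cref{eq:keyident}. For~\cref{eq:exact}, I write $d+v\in\Kpolar$ as $\scal{d}{u^i}+\scal{v}{u^i}\le 0$ for every $i$. Substituting $\scal{d}{u^i}=-\norm{d}^2$ from~\cref{eq:keyident}, this becomes $\scal{v}{u^i}\le\norm{d}^2$ for all $i$. Since there are finitely many indices, that is the same as the maximum being at most $\norm{d}^2$.

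For~\cref{eq:exact-int}, I first identify the interior of $\Kpolar$ as $\{w:\scal{w}{u^i}<0\ \forall i\}$, using that every $u^i$ is nonzero (indeed a unit vector).
\begin{itemize}
\item The strict set is open, by continuity of the finitely many linear functionals, and it lies in $\Kpolar$. Hence it is contained in $\inte\Kpolar$.
\item Conversely, suppose $w\in\inte\Kpolar$ but $\scal{w}{u^j}=0$ for some $j$. Then $w+\varepsilon u^j\in\Kpolar$ for small $\varepsilon>0$. But $\scal{w+\varepsilon u^j}{u^j}=\varepsilon>0$, which is a contradiction.
\end{itemize}
Applying this characterization to $w=d+v$ and using~\cref{eq:keyident} again gives $\scal{v}{u^i}<\norm{d}^2$ for all $i$, i.e.\ the strict maximum condition. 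No assumption $d\neq 0$ is needed for this step.

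For the inscribed ball, I prove containment and maximality separately.
\begin{itemize}
\item Containment: if $\norm{v}\le\norm{d}^2$, Cauchy--Schwarz with $\norm{u^i}=1$ gives $\scal{v}{u^i}\le\norm{v}\le\norm{d}^2$. Hence $\bar B(0,\norm{d}^2)\subset\mathcal{P}$, which recovers~\cref{eq:behball}.
\item Maximality: for any $r>\norm{d}^2$, the point $ru^1$ lies in $\bar B(0,r)$. However, $\scal{ru^1}{u^1}=r>\norm{d}^2$, so $ru^1\notin\mathcal{P}$.
\end{itemize}
So no larger ball centered at the origin fits inside $\mathcal{P}$, and the radius is exactly $\norm{d}^2$. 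The same computation shows that each $\norm{d}^2u^i$ lies on both spheres, which is the contact phenomenon mentioned in the introduction.

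The only mildly delicate point is the interior characterization in~\cref{eq:exact-int}, specifically the inclusion $\inte\Kpolar\subset\{\text{strict inequalities}\}$. I handle it by the perturbation along $u^j$ described above, rather than by appealing to a general fact about interiors of polyhedra. The remaining steps are direct substitutions.
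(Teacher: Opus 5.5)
Your proposal is correct and takes the same approach as the paper. You substitute $\scal{d}{u^i}=-\norm{d}^2$ into the generator-wise description of $\Kpolar$, use $\inte\Kpolar=\{w:\scal{w}{u^i}<0\ \forall i\}$ for the strict version, and read off the radius $\norm{d}^2/\max_i\norm{u^i}=\norm{d}^2$. Where the paper merely asserts the interior characterization and the radius, you prove them: the interior via the perturbation $w+\varepsilon u^j$, and the radius via explicit containment and maximality of the ball.
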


\begin{proof}
Since $\KK=\cone(B_\KK)$, $d+v\in\Kpolar$ is the conjunction of
$\scal{d+v}{u^i}\le 0$ over $i=1,\ldots,p$, which by~\cref{eq:keyident}
is~\cref{eq:exact}. The interior version is the strict-inequality
counterpart, using
$\inte\Kpolar=\{w:\scal{w}{u^i}<0\text{ for all }i\}$. The
inscribed-ball radius of~$\mathcal P$ is
$\norm{d}^2/\max_i\norm{u^i}=\norm{d}^2$.
\end{proof}

The polyhedron $\mathcal{P}$ improves on the inscribed-ball
estimate in two ways. Outside the (finitely many) directions
$u^1,\ldots,u^p$, the inclusion
$\{\norm{v}\le\norm{d}^2\}\subsetneq\mathcal{P}$ is strict: along
every other direction, perturbations of norm strictly greater than
$\norm{d}^2$ are admissible. The recession cone of~$\mathcal{P}$
equals~$\Kpolar$, so any ball of finite radius misses an entire
cone of admissible perturbations. The next corollary makes both
observations quantitative.

\begin{corollary}[directional depth in $\Kpolar$]\label{cor:dirdepth}
For $w\in\RR^n\setminus\{0\}$, set
$\rho_\KK(w):=\sup\{t\ge 0:d+tw\in\Kpolar\}$. Then
\begin{align}\label{eq:rho}
\rho_\KK(w)=
\begin{cases}
+\infty, & \max_i\scal{w}{u^i}\le 0,\\[1mm]
\dfrac{\norm{d}^2}{\max_i\scal{w}{u^i}}, & \text{otherwise.}
\end{cases}
\end{align}
The infimum of~$\rho_\KK$ over the unit sphere equals $\norm{d}^2$
and is attained exactly at $w\in B_\KK$.
\end{corollary}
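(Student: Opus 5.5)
The plan is to apply \cref{prop:exact} with $v=tw$ for $t\ge 0$. By~\cref{eq:exact}, $d+tw\in\Kpolar$ if and only if
\[
t\,\max_i\scal{w}{u^i}\le\norm{d}^2,
\]
since $t\ge0$ factors out of the maximum. Set $m(w):=\max_i\scal{w}{u^i}$ and split into two cases.

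\emph{Case $m(w)\le 0$.} The left side is nonpositive and $\norm{d}^2\ge0$, so the inequality holds for every $t\ge0$. Hence $\rho_\KK(w)=+\infty$.

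\emph{Case $m(w)>0$.} The admissible set of $t$ is the interval $[0,\norm{d}^2/m(w)]$, so the supremum equals the stated quotient. This proves~\cref{eq:rho}.

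For the infimum over the unit sphere, take $\norm{w}=1$. By Cauchy--Schwarz and $\norm{u^i}=1$, we have $m(w)\le 1$. If $m(w)\le0$ the value is $+\infty$. Otherwise $\rho_\KK(w)=\norm{d}^2/m(w)\ge\norm{d}^2$. So $\rho_\KK\ge\norm{d}^2$ on the sphere.

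Equality requires $m(w)=1$, that is, $\scal{w}{u^i}=1$ for some $i$. For unit vectors this is the equality case of Cauchy--Schwarz, which forces $w=u^i$. Conversely, for $w=u^j$ we get $m(u^j)\ge\scal{u^j}{u^j}=1$, hence $m(u^j)=1$, and therefore $\rho_\KK(u^j)=\norm{d}^2$. So the infimum is attained exactly on $B_\KK$.

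Two degenerate points need care. First, the case $d=0$: here $\norm{d}^2=0$, and $\rho$ is $0$ whenever $m(w)>0$ and $+\infty$ otherwise. The infimum is then $0$, but it is attained at every unit $w$ with $m(w)>0$, not only on $B_\KK$. So the ``exactly'' clause needs $d\ne0$. This holds whenever $0\notin\aff(B_\KK)$, which is the standing pointed/nondegenerate setting of~\cite{BehlingEtAl_CDC}, and I would state that assumption explicitly. Second, the case $m(w)\le0$ contributes $+\infty$ and is harmless. Neither step is a real obstacle; the only subtlety is this nondegeneracy caveat, which makes the characterization of attainment valid.
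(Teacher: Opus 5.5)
Your proof is correct and follows essentially the same route as the paper's: substitute $v=tw$ into \cref{prop:exact}, split on the sign of $\max_i\scal{w}{u^i}$, and use $\max_i\scal{w}{u^i}\le 1$ on the unit sphere. Your Cauchy--Schwarz equality-case argument supplies the ``exactly at $w\in B_\KK$'' clause, which the paper's proof only asserts. Your caveat is also correct and needed: when $d=0$, as happens for the pointed cone of \cref{ex:degen}, the infimum $0$ is attained at every unit $w$ with $\max_i\scal{w}{u^i}>0$, so that clause requires $0\notin\aff(B_\KK)$, and pointedness alone does not suffice.
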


\begin{proof}
By~\cref{prop:exact}, $d+tw\in\Kpolar$ iff
$t\scal{w}{u^i}\le\norm{d}^2$ for every~$i$. If
$\max_i\scal{w}{u^i}\le 0$ every $t\ge 0$ qualifies; otherwise the
binding constraint sits at any
$i^*\in\argmax_i\scal{w}{u^i}>0$. Over the unit sphere,
$\sup_w\max_i\scal{w}{u^i}=\max_i\norm{u^i}=1$, attained at $w=u^i$,
so the infimum of $\rho_\KK$ is $\norm{d}^2$.
\end{proof}

Two special cases stand out. For $w\in B_\KK$, identity
\cref{eq:rho} returns $\rho_\KK(w)=\norm{d}^2$, the inscribed-ball
regime. For $w\in\Kpolar$, $\rho_\KK(w)=+\infty$, since
$-d\in\Kpolar$ keeps $d+tw\in\Kpolar$ for every $t\ge 0$. The
example below puts numbers on both extremes and an interior case
in $\RR^2$, and quantifies the gap between \cref{prop:exact} and
the inscribed-ball estimate.

\begin{example}[the nonnegative orthant]\label{ex:orthant}
Take $\KK=\RR^2_{+}$, so $B_\KK=\{e_1,e_2\}$ and
$\Kpolar=-\RR^2_{+}$. A direct computation gives
\begin{align}
d=-\tfrac12(e_1+e_2),\qquad \norm{d}^2=\tfrac12.
\end{align}
\Cref{prop:exact} returns the unbounded polyhedron
\begin{align}
\mathcal{P}=\bigl\{v\in\RR^2:v_1\le \tfrac12,\ v_2\le \tfrac12\bigr\},
\end{align}
whereas Theorem~2.6 of~\cite{BehlingEtAl_CDC} only guarantees the
disc $\mathcal{B}=\{\norm{v}\le \tfrac12\}$. The disc is tangent
to $\partial\mathcal P$ at the two points $(\tfrac12,0)$ and
$(0,\tfrac12)$, that is, exactly along the generators $e_1,e_2$,
and strictly inside $\mathcal P$ everywhere else.
\Cref{fig:orthant} displays both sets and the gap. Two concrete
witnesses of the gap are immediate:
\begin{itemize}[leftmargin=*]
\item Every $v=-t(e_1+e_2)$ with $t\ge 0$ belongs to~$\mathcal P$,
yet $\norm{v}=t\sqrt 2$ is unbounded; the half-line sits inside
the recession cone of~$\mathcal P$ and is missed by any ball.
\item Along the diagonal $w=(e_1+e_2)/\sqrt 2$,
identity~\cref{eq:rho} gives $\rho_\KK(w)=\norm{d}^2/(1/\sqrt 2)=1/\sqrt 2$,
strictly larger than the inscribed-ball value~$\tfrac12$.
\end{itemize}
\end{example}

\begin{figure}[ht]
    \centering
    \includegraphics[width=0.6\textwidth]{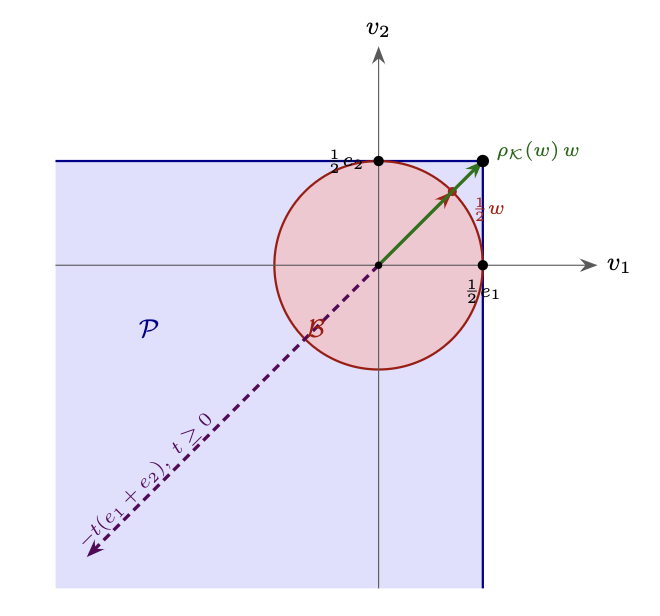}
    \caption{The orthant case of \cref{ex:orthant}. The exact admissible
set $\mathcal{P}=\{v_1\le\tfrac12,\ v_2\le\tfrac12\}$ from
\cref{prop:exact} is the unbounded $L$-shaped region (blue); the
inscribed-ball estimate $\mathcal{B}=\{\norm{v}\le\tfrac12\}$ of
\cite{BehlingEtAl_CDC} is the disc (red), tangent to
$\partial\mathcal{P}$ only at $\tfrac12 e_1$ and $\tfrac12 e_2$.
Along the diagonal $w=(e_1+e_2)/\sqrt 2$, the inscribed ball reaches
$\tfrac12 w$ (short red arrow) while \cref{cor:dirdepth} delivers
$\rho_\KK(w)=1/\sqrt 2$, taking $0$ to the corner of~$\mathcal{P}$
(green arrow). The recession half-line $-t(e_1+e_2)$ lies entirely
in~$\mathcal{P}$ and is missed by every ball.}
\label{fig:orthant}
\end{figure}
\section{Closed form via the Gram matrix}\label{sec:gram}

\Cref{prop:exact} described $\mathcal{P}$ in terms of the unit
generators $u^1,\ldots,u^p$ and the scalar $\norm{d}^2$. The
generators are typically given as inputs; $\norm{d}^2$ and~$d$
itself are not. The next result reduces their computation to a
single $p\times p$ Gram-matrix inversion and provides spectral
bounds that quantify how close $B_\KK$ is to losing linear
independence.

\begin{proposition}[Gram-matrix formula]\label{prop:gram}
Suppose the base $B_\KK=\{u^1,\ldots,u^p\}$ of the cone $\KK$ is
linearly independent, and let $M\in\RR^{p\times p}$,
$M_{ij}=\scal{u^i}{u^j}$, be its Gram matrix. Then
\begin{align}\label{eq:gram-d}
d=-\frac{1}{\one^\top M^{-1}\one}\sum_{i=1}^{p}(M^{-1}\one)_{i}\,u^i,\qquad
\norm{d}^2=\frac{1}{\one^\top M^{-1}\one},
\end{align}
and
\begin{align}\label{eq:gram-bds}
\frac{\lambda_{\min}(M)}{p}\;\le\;\norm{d}^2\;\le\;\frac{\lambda_{\max}(M)}{p}\;\le\;1.
\end{align}
\end{proposition}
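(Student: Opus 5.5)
Since $-d=\proj_{\aff(B_\KK)}(0)$ lies in $\aff(B_\KK)$, I will write $-d=\sum_i\beta_i u^i$ with $\sum_i\beta_i=1$. Linear independence of $B_\KK$ makes $M$ positive definite, hence invertible, and makes the coefficients $\beta$ unique. Set $U=[u^1,\ldots,u^p]$, so $M=U^\top U$. Then the basic identity \cref{eq:keyident}, $\scal{d}{u^i}=-\norm{d}^2$ for all $i$, reads $U^\top U\beta=\norm{d}^2\one$, that is, $M\beta=\norm{d}^2\one$. Hence $\beta=\norm{d}^2M^{-1}\one$. The affine constraint $\one^\top\beta=1$ then forces $\norm{d}^2\,\one^\top M^{-1}\one=1$, which gives the formula for $\norm{d}^2$. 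Substituting back, $\beta=M^{-1}\one/(\one^\top M^{-1}\one)$ and $d=-U\beta$, which is \cref{eq:gram-d}.

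As a consistency check, $\norm{d}^2=\beta^\top M\beta=\norm{d}^2\beta^\top\one=\norm{d}^2$. One subtlety: \cref{eq:keyident} is only a necessary condition. I rely on the uniqueness of the projection, plus the fact that any point of $\aff(B_\KK)$ satisfying \cref{eq:keyident} is orthogonal to the direction space $\mathrm{span}\{u^i-u^1\}$ and therefore equals the projection. So the derivation identifies $-d$ correctly, and $\one^\top M^{-1}\one>0$ holds because $M^{-1}\succ0$.

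For the spectral bounds \cref{eq:gram-bds} I apply the Rayleigh quotient to $M^{-1}$ at the vector $\one$, which has $\norm{\one}^2=p$:
\[
\frac{p}{\lambda_{\max}(M)}\le \one^\top M^{-1}\one\le \frac{p}{\lambda_{\min}(M)}.
\]
Inverting these inequalities gives $\lambda_{\min}(M)/p\le\norm{d}^2\le\lambda_{\max}(M)/p$.

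The last inequality $\lambda_{\max}(M)\le p$ is not quite enough; it only gives $\lambda_{\max}(M)/p\le1$ in a weak sense. A sharper statement is available: $\lambda_{\max}(M)\le\Tr M=p$ because $M\succeq0$ with unit diagonal, and this directly gives $\lambda_{\max}(M)/p\le1$. That is the required bound. Alternatively, Weyl's inequality, or Gershgorin with $|M_{ij}|\le1$, gives the same estimate.

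The main obstacle is the justification that \cref{eq:keyident} together with the affine constraint pins down $-d$. The argument above handles it.
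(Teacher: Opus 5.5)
Your proof is correct and is essentially the paper's argument. The paper obtains the same linear system, $2M\lambda=\mu\one$ with $\one^\top\lambda=1$, as the Lagrange condition for minimizing $\lambda^\top M\lambda$. You read it off the orthogonality identity \cref{eq:keyident}, which identifies the multiplier directly as $\norm{d}^2$ and spares you the substitution $\lambda^\top M\lambda$. Your spectral bounds (Rayleigh quotient of $M^{-1}$ at $\one$, then $\lambda_{\max}(M)\le\Tr M=p$) are exactly the paper's.

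Two presentational points. Your hedge that $\lambda_{\max}(M)\le p$ is ``not quite enough'' is spurious, since that inequality is literally $\lambda_{\max}(M)/p\le 1$. The sufficiency discussion is also unnecessary: you only use \cref{eq:keyident} as a necessary condition satisfied by the known point $-d$.
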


\begin{proof}
Writing $\circum(B_\KK)=\sum_i\lambda_i u^i$ with
$\one^\top\lambda=1$, the minimum-norm characterization of the
circumcenter \cite[Lemma~2.5(i)]{BehlingEtAl_CDC} says that
$\lambda$ minimizes $\lambda^\top M\lambda$ subject to
$\one^\top\lambda=1$. The Lagrange condition $2M\lambda=\mu\one$
together with $\one^\top\lambda=1$ gives
$\lambda=M^{-1}\one/(\one^\top M^{-1}\one)$, and a direct
substitution yields
$\norm{\circum(B_\KK)}^2=\lambda^\top M\lambda=1/(\one^\top M^{-1}\one)$.
This proves~\cref{eq:gram-d}. The Rayleigh-quotient
inequalities~\cite{HornJohnson:2013} give
$$p/\lambda_{\max}(M)\le\one^\top M^{-1}\one\le p/\lambda_{\min}(M),$$
which yields, on reciprocation, the spectral bounds
in~\cref{eq:gram-bds}; the rightmost inequality is
$\lambda_{\max}(M)\le \Tr M=p$.
\end{proof}

The two-sided bound~\cref{eq:gram-bds} places $\norm{d}^2$ between
$\lambda_{\min}(M)/p$ and $\lambda_{\max}(M)/p$. The lower bound is
the quantitative form of the qualitative loss-of-pointedness
phenomenon in~\cite[Prop.~3.2]{BehlingEtAl_CDC}.

\begin{remark}[loss of pointedness]\label{rem:cond}
The lower bound $\lambda_{\min}(M)/p$ in~\cref{eq:gram-bds} measures
quantitatively how far the conic base is from positive linear
dependence: it vanishes precisely as $B_\KK$ approaches dependence,
in agreement with Proposition~3.2 of~\cite{BehlingEtAl_CDC}. The
upper bound is saturated when $B_\KK$ is orthonormal; in that case
$M=I_p$ and $d=-(1/p)\sum_i u^i$, $\norm{d}^2=1/p$.
\end{remark}

A second consequence of the closed form~\cref{eq:gram-d} is
quantitative stability of $d$ under perturbations of the
generators. This is what one needs when the active set changes
between iterations of an algorithm and the conic base of the new
active cone differs only slightly from the previous one.

\begin{remark}[continuity in the generators]\label{rem:cont}
Equation~\cref{eq:gram-d} expresses~$d$ as a rational function of
the entries of~$M$, which is itself smooth in the generators. On
any open neighborhood where $\{u^1,\ldots,u^p\}$ remains linearly
independent, $(u^1,\ldots,u^p)\mapsto d$ is therefore real-analytic,
with local Lipschitz constant controlled by~$\norm{M^{-1}}$. This
is the quantitative form of the stability needed when~$d$ is
updated between iterations.
\end{remark}

\section{An aperture interpretation}\label{sec:aperture}

Identity~\cref{eq:keyident} also has a geometric reading. All
generators have the same inner product with $-d$, so they make the
same angle with the unit vector along $-d$. The conic base lies on
a single right circular cone of half-aperture $\arccos\norm{d}$ and
axis $-d/\norm{d}$. We write $S^{n-1}=\{x\in\RR^n:\norm{x}=1\}$
for the unit sphere in $\RR^n$.

\begin{proposition}[common aperture of the conic base]\label{prop:aperture}
Suppose $d\ne 0$ and set $a:=-d/\norm{d}\in S^{n-1}$ and
$\theta(B_\KK):=\arccos\norm{d}\in[0,\pi/2]$. Then
$\scal{u^i}{a}=\cos\theta(B_\KK)$ for every $i=1,\ldots,p$, so
$B_\KK$ lies on the bounding circle of the spherical cap
$\{u\in S^{n-1}:\scal{u}{a}\ge\cos\theta(B_\KK)\}$.
\end{proposition}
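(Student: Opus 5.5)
The plan is to read the statement directly off the key identity \cref{eq:keyident}. With $d\ne 0$ and $a=-d/\norm{d}$, compute for each $i$
\begin{align*}
\scal{u^i}{a}=-\frac{\scal{u^i}{d}}{\norm{d}}=\frac{\norm{d}^2}{\norm{d}}=\norm{d}.
\end{align*}
So the whole content reduces to checking that $\theta(B_\KK)=\arccos\norm{d}$ is well defined in $[0,\pi/2]$. Then $\cos\theta(B_\KK)=\norm{d}$, and the identity above is exactly the claim $\scal{u^i}{a}=\cos\theta(B_\KK)$.

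For well-definedness we need $0\le\norm{d}\le 1$. Nonnegativity is trivial. For the upper bound, use that $-d=\proj_{\aff(B_\KK)}(0)$ by \cref{eq:circd}, so $\norm{d}$ is the distance from the origin to $\aff(B_\KK)$. Since $u^1\in\aff(B_\KK)$, this distance is at most $\norm{u^1}=1$. Alternatively, Cauchy--Schwarz applied to \cref{eq:keyident} gives $\norm{d}^2=|\scal{d}{u^1}|\le\norm{d}$, and dividing by $\norm{d}>0$ yields $\norm{d}\le 1$. This argument does not need the linear-independence hypothesis of \cref{prop:gram}, so I would use it rather than \cref{eq:gram-bds}. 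Hence $\arccos\norm{d}\in[0,\pi/2]$, with $\cos\theta(B_\KK)=\norm{d}$.

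Finally, each $u^i$ is a unit vector, so $u^i\in S^{n-1}$. The equality $\scal{u^i}{a}=\cos\theta(B_\KK)$ puts $u^i$ in the spherical cap $\{u\in S^{n-1}:\scal{u}{a}\ge\cos\theta(B_\KK)\}$. Moreover, it puts $u^i$ on the cap's relative boundary $\{u\in S^{n-1}:\scal{u}{a}=\cos\theta(B_\KK)\}$, the bounding circle. No step is a genuine obstacle; the only point requiring care is the range check $\norm{d}\le 1$.
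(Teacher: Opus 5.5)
Your proposal is correct and matches the paper's proof, which is the same one-line computation $\scal{u^i}{a}=-\scal{u^i}{d}/\norm{d}=\norm{d}^2/\norm{d}=\norm{d}$ from \cref{eq:keyident}. Your extra check that $\norm{d}\le 1$ is a detail the paper leaves implicit, and your version of it avoids the linear-independence hypothesis of \cref{prop:gram}. You get it either as the distance from $0$ to $\aff(B_\KK)$ being at most $\norm{u^1}=1$, or from Cauchy--Schwarz.
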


\begin{proof}
By~\cref{eq:keyident},
$\scal{u^i}{a}=-\scal{u^i}{d}/\norm{d}=\norm{d}^2/\norm{d}=\norm{d}=\cos\theta(B_\KK)$.
\end{proof}

Two limiting regimes explain the geometry. If $B_\KK=\{u\}$ is a
single ray, then $\theta(B_\KK)=0$, $\norm{d}=1$, and the polar
cone is a half-space; the inscribed-ball estimate has its largest
radius. As $B_\KK$ approaches positive linear dependence,
equivalently $\circum(B_\KK)\to 0$, the half-aperture
$\theta(B_\KK)\to\pi/2$, the generators flatten toward a hyperplane
through the origin, and $\norm{d}\to 0$. This is the same
loss-of-pointedness regime quantified in~\cref{rem:cond}. The aperture identity also yields an estimate for the directional
depth~\cref{eq:rho} that uses only two angular quantities: the
half-aperture $\theta$ and the angle between $w$ and the axis
$a=-d/\norm{d}$.

\begin{corollary}[angular bound on directional depth]\label{cor:angular}
Under the assumptions of~\cref{prop:aperture}, fix $w\in S^{n-1}$
and write $\phi:=\arccos\scal{w}{a}\in[0,\pi]$,
$\theta:=\theta(B_\KK)$. Then
\begin{align}\label{eq:angular}
\max_{1\le i\le p}\scal{w}{u^i}\le \cos(\phi-\theta),
\end{align}
and consequently
\begin{align}\label{eq:rhobound}
\rho_\KK(w)\ge \frac{\cos^2\theta}{[\cos(\phi-\theta)]_+},
\end{align}
with $[t]_+=\max\{t,0\}$ and $\rho_\KK(w)=+\infty$ when
$\phi-\theta\ge\pi/2$.
\end{corollary}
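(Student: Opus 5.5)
The plan is to reduce \cref{eq:angular} to a statement about angles on the unit sphere, and then read \cref{eq:rhobound} off the exact formula of \cref{cor:dirdepth}.

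\emph{Step 1: each generator sits at angle $\theta$ from the axis.} By \cref{prop:aperture}, every generator satisfies $\scal{u^i}{a}=\cos\theta$ with $\norm{u^i}=1$, so the spherical angle between $u^i$ and $a$ is exactly $\theta$. By hypothesis, the spherical angle between $w$ and $a$ is $\phi$.

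\emph{Step 2: the triangle inequality on $S^{n-1}$.} The geodesic distance $\angle(x,y)=\arccos\scal{x}{y}$ is a metric on $S^{n-1}$. Hence
\[
\angle(w,u^i)\ge|\angle(w,a)-\angle(a,u^i)|=|\phi-\theta|.
\]
Since $\cos$ is decreasing on $[0,\pi]$ and $|\phi-\theta|\in[0,\pi]$, this gives
\[
\scal{w}{u^i}=\cos\angle(w,u^i)\le\cos|\phi-\theta|=\cos(\phi-\theta).
\]
Taking the maximum over $i$ yields \cref{eq:angular}.

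If one prefers to avoid quoting the spherical metric property, there is a direct verification. Decompose $w=\cos\phi\,a+\sin\phi\,w^\perp$ and $u^i=\cos\theta\,a+\sin\theta\,u^{i\perp}$, where $w^\perp,u^{i\perp}$ are unit vectors (or zero) orthogonal to $a$; here $\sin\phi,\sin\theta\ge0$. Then
\[
\scal{w}{u^i}=\cos\phi\cos\theta+\sin\phi\sin\theta\,\scal{w^\perp}{u^{i\perp}}\le\cos\phi\cos\theta+\sin\phi\sin\theta=\cos(\phi-\theta),
\]
by Cauchy--Schwarz. I would actually write this version, since it is self-contained.

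\emph{Step 3: the depth bound.} Recall from \cref{prop:aperture} that $\norm{d}^2=\cos^2\theta$. Split into two cases.

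\begin{itemize}
\item If $\max_i\scal{w}{u^i}\le0$, then \cref{eq:rho} gives $\rho_\KK(w)=+\infty$, and the bound holds trivially.
\item Otherwise $0<\max_i\scal{w}{u^i}\le\cos(\phi-\theta)=[\cos(\phi-\theta)]_+$. Then
\[
\rho_\KK(w)=\frac{\norm{d}^2}{\max_i\scal{w}{u^i}}\ge\frac{\cos^2\theta}{[\cos(\phi-\theta)]_+}.
\]
\end{itemize}

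\emph{Step 4: the infinite case.} When $\phi-\theta\ge\pi/2$, we have $\phi-\theta\in[\pi/2,\pi]$, so $\cos(\phi-\theta)\le0$. By \cref{eq:angular}, $\max_i\scal{w}{u^i}\le0$, and \cref{cor:dirdepth} gives $\rho_\KK(w)=+\infty$. This is consistent with interpreting the right-hand side of \cref{eq:rhobound} as $+\infty$ when the denominator vanishes.

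The only delicate point is the sign bookkeeping in Step 2: one needs $\sin\phi\ge0$ and $\sin\theta\ge0$, which holds because $\phi,\theta\in[0,\pi]$, and one needs the degenerate cases $\phi\in\{0,\pi\}$ or $\theta=0$, where $w^\perp$ or $u^{i\perp}$ is undefined. In those cases the corresponding sine is zero, so the cross term vanishes and the inequality becomes an equality.
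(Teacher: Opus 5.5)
Your proof is correct, and the version you say you would actually write is exactly the paper's argument: you split $w$ and each $u^i$ into components along $a$ and orthogonal to it, bound the transverse inner product by Cauchy--Schwarz, and substitute into \cref{eq:rho}. The spherical-triangle-inequality route is an equivalent repackaging of the same estimate, and your explicit treatment of the degenerate cases ($\sin\phi=0$ or $\sin\theta=0$) and of the $[\,\cdot\,]_+$ convention is slightly more careful than the paper's.
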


\begin{proof}
By~\cref{prop:aperture}, each $u^i$ admits a decomposition
$u^i=\cos\theta\,a+\sin\theta\,e^i$ with unit $e^i\perp a$; write
$w=\cos\phi\,a+\sin\phi\,f$ similarly with unit $f\perp a$ and
$\sin\phi\ge 0$. Then
\begin{align}
\scal{w}{u^i}&=\cos\theta\cos\phi+\sin\theta\sin\phi\,\scal{f}{e^i}\\
&\le\cos\theta\cos\phi+\sin\theta\sin\phi=\cos(\phi-\theta),
\end{align}
which is~\cref{eq:angular}. Substituting into~\cref{eq:rho}
gives~\cref{eq:rhobound}; when $\phi-\theta\ge\pi/2$ the right-hand
side of~\cref{eq:angular} is non-positive and~\cref{eq:rho} returns
$+\infty$.
\end{proof}

The bound~\cref{eq:rhobound} requires only the two scalars
$\norm{d}$ and $\phi$, with no dependence on the individual
generators. This is the right object when $d$ is computed once at
$\bar x$ and reused for many candidate directions $w$, as in the
algorithmic applications of~\Cref{sec:step}.

\section{Beyond polyhedral cones: a sharp polar description}\label{sec:nonpoly}

The constructions of \Cref{sec:exact,sec:gram,sec:aperture} all
rest on a finite conic base. Two of the most prominent cones in
conic optimization fall outside that hypothesis: the second-order
cone $\mathcal{L}^n$ and the positive-semidefinite cone
$\Smat^n_+$ are not polyhedral, and their extreme-ray sets are
continuous manifolds. We isolate here the geometric condition
under which the inscribed-ball theorem extends, and prove a
non-polyhedral counterpart that goes further than its polyhedral
predecessor: it identifies the admissible set as the polar-style
intersection
\begin{align*}
\mathcal{P}_\KK=\bigcap_{u\in E_\KK}\bigl\{v\in\RR^n:\scal{v}{u}\le\norm{d}^2\bigr\},
\end{align*}
and pins down the contact set between the inscribed ball and the
boundary of $\mathcal{P}_\KK$ as the homothetic image
$\norm{d}^2\,\closu E_\KK$. The geometric condition is then
verified for several classes of cones beyond the symmetric ones,
and explicit obstructions are exhibited in two contrasting
situations: a pointed polyhedral cone in $\RR^3$ and the family of
$p$-cones with $p\ne 2$.

\begin{definition}[extremal section]\label{def:Ek}
For a closed convex cone $\KK\subset\RR^n$ define the extremal
section
\begin{align}
E_\KK:=\bigl\{u\in\RR^n:\norm{u}=1,\ \RR_+u\text{ is an extreme ray of }\KK\bigr\}.
\end{align}
\end{definition}

For polyhedral~$\KK$ one has $E_\KK=B_\KK$. In the non-polyhedral
cases of interest below, $E_\KK$ may be infinite or even a
continuous manifold: the second-order cone has $E_\KK$
homeomorphic to a sphere, and the positive-semidefinite cone has
$E_\KK$ homeomorphic to a real projective space.

We work under the hypothesis that $\aff(E_\KK)$ avoids the origin.
The next lemma rephrases this as the existence of a constant on
$E_\KK$, the non-polyhedral analogue of identity~\cref{eq:keyident}.

\begin{lemma}[constant-projection characterization]\label{lem:hyp}
Let $\KK\subset\RR^n$ be a closed convex cone with
$E_\KK\ne\emptyset$. The following are equivalent:
\begin{itemize}
\item[ {\bf (i)}] $0\notin\closu\aff(E_\KK)$;
\item[ {\bf (ii)}] there exist $\nu\in\RR^n\setminus\{0\}$ and
$c\ne 0$ such that
\begin{align}\label{eq:lemhyp}
\scal{u}{\nu}=c\qquad\text{for every }u\in E_\KK.
\end{align}
\end{itemize}
The canonical witness is $(\nu,c)=(p_0,\norm{p_0}^2)$ with
$p_0:=\proj_{\closu\aff(E_\KK)}(0)$, in which case
\cref{eq:lemhyp} reproduces identity~\cref{eq:keyident} for every
$u\in E_\KK$ with $d=-p_0$.
\end{lemma}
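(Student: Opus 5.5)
The plan is to reduce both directions to elementary facts about a single closed affine subspace. Set $A:=\closu\aff(E_\KK)$. Since we are in $\RR^n$, every affine subspace is closed, so $A=\aff(E_\KK)$. Since $E_\KK\ne\emptyset$, $A$ is nonempty, so it has the form $A=p_0+L$ with $L$ a linear subspace (the direction of $A$) and $p_0:=\proj_{A}(0)$. The projection characterization gives $p_0\perp L$. This is exactly the argument recalled before~\cref{eq:keyident}, now with $B_\KK$ replaced by $E_\KK$.

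For (i)$\Rightarrow$(ii), first note that $0\notin A$ forces $p_0\ne 0$. For any $u\in E_\KK\subset A$ we have $u-p_0\in L$, so $\scal{u-p_0}{p_0}=0$, that is,
\begin{align*}
\scal{u}{p_0}=\norm{p_0}^2\qquad\text{for every }u\in E_\KK.
\end{align*}
Hence $(\nu,c)=(p_0,\norm{p_0}^2)$ is a witness, with $\nu\ne 0$ and $c>0$. Substituting $d=-p_0$ turns this into $\scal{d}{u}=-\norm{d}^2$ on $E_\KK$, which is~\cref{eq:keyident}. This simultaneously proves the claim about the canonical witness.

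For (ii)$\Rightarrow$(i), consider the hyperplane $H:=\{x\in\RR^n:\scal{x}{\nu}=c\}$. Because $\nu\ne 0$, $H$ is a closed affine hyperplane. By~\cref{eq:lemhyp} it contains $E_\KK$. Since $H$ is affine it contains $\aff(E_\KK)$, and since $H$ is closed it contains $\closu\aff(E_\KK)$. Finally, $\scal{0}{\nu}=0\ne c$, so $0\notin H$, and therefore $0\notin\closu\aff(E_\KK)$.

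There is essentially no obstacle here. The only points needing care are nonemptiness of $E_\KK$, which is needed for the projection $p_0$ to be defined, and the observation that the closure is harmless. The closure would matter only in the Hilbert-space setting of \cref{rem:hilbert}. The same argument still works there, because projection onto a nonempty closed affine subspace exists and the orthogonality $p_0\perp L$ persists for the closed direction space.
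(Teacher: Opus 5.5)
Your proof is correct and follows the paper's argument essentially verbatim: for (i)$\Rightarrow$(ii) you use the projection $p_0$ of the origin onto $\closu\aff(E_\KK)$ together with the orthogonality $\scal{u-p_0}{p_0}=0$, and for (ii)$\Rightarrow$(i) you note that $\closu\aff(E_\KK)$ lies in the closed hyperplane $\{x:\scal{x}{\nu}=c\}$, which misses $0$. Your side remarks, that the closure is automatic in $\RR^n$ and that $c=\norm{p_0}^2>0$ for the canonical witness, are accurate and harmless additions.
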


\begin{proof}
(i)$\Rightarrow$(ii). Set $A:=\closu\aff(E_\KK)$ and
$p_0:=\proj_A(0)$, which is nonzero by~(i). Orthogonality of the
projection onto a closed affine subspace yields
$\scal{x-p_0}{p_0}=0$ for every $x\in A$, hence
$\scal{x}{p_0}=\norm{p_0}^2$ on $A$, and {a fortiori} on
$E_\KK\subset A$. Take $(\nu,c)=(p_0,\norm{p_0}^2)$.

(ii)$\Rightarrow$(i). The hypothesis says
$E_\KK\subset\{x\in\RR^n:\scal{x}{\nu}=c\}$, hence
$\closu\aff(E_\KK)\subset\{x:\scal{x}{\nu}=c\}$, and
$\scal{0}{\nu}=0\ne c$ excludes the origin.
\end{proof}

The canonical witness $(\nu,c)=(p_0,\norm{p_0}^2)$ produced in the
proof yields $-d=p_0$, the natural extension of~\cref{eq:circd} to
the non-polyhedral setting. The hypothesis is strictly stronger
than pointedness: pointedness of $\KK$ is equivalent, by
Hahn--Banach, to the existence of $\mu\in\RR^n\setminus\{0\}$ with
$\scal{u}{\mu}>0$ {uniformly} for $u\in E_\KK$, whereas the lemma
demands $\scal{u}{\nu}$ {constant} on $E_\KK$, not merely uniformly
positive. \Cref{ex:degen} below exhibits a pointed cone for which
this gap is non-empty.

\begin{theorem}[inscribed-ball estimate beyond polyhedrality]\label{thm:nonpoly}
Let $\KK\subset\RR^n$ be a nontrivial closed convex pointed cone,
and suppose $0\notin\closu\aff(E_\KK)$. Set
\begin{align}\label{eq:dnonpoly}
d:=-\proj_{\closu\aff(E_\KK)}(0).
\end{align}
Then $d\ne 0$, $\scal{d}{u}=-\norm{d}^2$ for every $u\in E_\KK$,
and $d+v\in\Kpolar$ for every $\norm{v}\le \norm{d}^2$.
\end{theorem}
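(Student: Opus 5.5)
The proof runs in three steps: apply \cref{lem:hyp}, reduce membership in $\Kpolar$ to a condition on $E_\KK$, and finish with Cauchy--Schwarz.

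\emph{Step 1: the identity on $E_\KK$.} Since $\KK$ is nontrivial, closed and pointed, it has a compact convex base $C=\{x\in\KK:\scal{x}{\mu}=1\}$. By Krein--Milman, $C$ has extreme points, and these generate extreme rays of $\KK$. Hence $E_\KK\ne\emptyset$ and the standing assumption of \cref{lem:hyp} holds. Under hypothesis (i), the implication (i)$\Rightarrow$(ii) of that lemma gives $p_0=\proj_{\closu\aff(E_\KK)}(0)\ne0$ with $\scal{u}{p_0}=\norm{p_0}^2$ for all $u\in E_\KK$. With $d=-p_0$ as in~\cref{eq:dnonpoly}, this is exactly $d\ne0$ and $\scal{d}{u}=-\norm{d}^2$ on $E_\KK$.

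\emph{Step 2: reduce polar membership to $E_\KK$.} I would use the representation $\KK=\closu\cone(E_\KK)$ recalled in \cref{sec:prelim}. Every $x\in C$ is a limit of convex combinations of extreme points of $C$. Each extreme point of $C$ is a positive multiple of some element of $E_\KK$, because extreme points of the base correspond to extreme rays of $\KK$. Consequently, for any $w\in\RR^n$,
\[
\scal{w}{u}\le0 \text{ for all } u\in E_\KK \implies w\in\Kpolar .
\]
The reason is that the linear functional $\scal{w}{\cdot}$ is nonpositive on $\cone(E_\KK)$ and, by continuity, on its closure. The one point to check here is the correspondence between extreme points of $C$ and extreme rays of $\KK$ (a standard fact), together with the fact that a closure is taken. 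Since $\Kpolar$ is defined by a family of closed half-spaces, the closure causes no difficulty. I expect this to be the main, though mild, obstacle: the rest is algebra.

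\emph{Step 3: Cauchy--Schwarz.} Let $\norm{v}\le\norm{d}^2$ and $u\in E_\KK$. Then
\[
\scal{d+v}{u}=-\norm{d}^2+\scal{v}{u}\le-\norm{d}^2+\norm{v}\norm{u}\le0,
\]
using $\norm{u}=1$. By Step 2, $d+v\in\Kpolar$, which completes the proof.
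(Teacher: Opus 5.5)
Your proposal is correct and matches the paper's proof essentially step for step. Both use \cref{lem:hyp} to get $d\ne 0$ and $\scal{d}{u}=-\norm{d}^2$, then Cauchy--Schwarz on $E_\KK$, then extend to all of $\KK$ through the compact base $C$, Krein--Milman, $\KK=\closu\cone(E_\KK)$, and continuity of the inner product. Your explicit check that $E_\KK\ne\emptyset$ is a small point the paper leaves implicit; it is needed both for the standing assumption of \cref{lem:hyp} and for $d$ to be well defined.
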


\begin{proof}
The hypothesis gives $d\ne 0$ at once, and \cref{lem:hyp} applied
with $\nu=-d$, $c=\norm{d}^2$ yields $\scal{u}{-d}=\norm{d}^2$,
equivalently $\scal{d}{u}=-\norm{d}^2$, for every $u\in E_\KK$.
For $\norm{v}\le\norm{d}^2$ and $u\in E_\KK$, Cauchy--Schwarz gives
\begin{align}
\scal{d+v}{u}=-\norm{d}^2+\scal{v}{u}\le-\norm{d}^2+\norm{v}\le 0.
\end{align}
Now let $z\in \KK$. Pointedness of $\KK$ yields, via
Hahn--Banach~\cite{Rockafellar:1970}, a vector $\mu\in\RR^n$ with
$\scal{x}{\mu}>0$ on $\KK\setminus\{0\}$, so the cross-section
$C:=\{x\in \KK:\scal{x}{\mu}=1\}$ is a compact convex base of
$\KK$, and Krein--Milman~\cite{BC:book} applied to $C$ gives
$\KK=\closu\cone(E_\KK)$. Pick
$z_k=\sum_{i=1}^{q_k}t_k^i u_k^i\to z$ with $t_k^i\ge 0$ and
$u_k^i\in E_\KK$. Then for each~$k$,
\begin{align}
\scal{d+v}{z_k}=\sum_{i=1}^{q_k}t_k^i\scal{d+v}{u_k^i}\le 0,
\end{align}
and continuity of the inner product yields $\scal{d+v}{z}\le 0$.
\end{proof}

The proof actually delivers more:
$\scal{d}{u}=-\norm{d}^2$ uniformly for $u\in E_\KK$, the
non-polyhedral analogue of~\cref{eq:keyident}. \Cref{prop:exact}
therefore extends verbatim, with the maximum over the finite base
$B_\KK$ replaced by the supremum over $E_\KK$.

The next theorem promotes \cref{thm:nonpoly} from an inscribed-ball
estimate into a sharp polar-style description of the admissible set,
together with an explicit identification of the contact set. Both
statements are new even in the polyhedral case: $E_\KK$ is then
$B_\KK$, the polar description recovers~\cref{eq:exact}, and the
contact set specializes to the finite collection
$\{\norm{d}^2 u^i\}_{i=1}^p$ of \cref{ex:orthant}.

\begin{theorem}[sharpness of the inscribed ball]\label{thm:sharp}
Under the hypotheses of \cref{thm:nonpoly}, the admissible set
\begin{align}\label{eq:Pnp}
\mathcal{P}_\KK:=\{v\in\RR^n:d+v\in\Kpolar\}
\end{align}
admits the polar-style description
\begin{align}\label{eq:Pnp-eq}
\mathcal{P}_\KK=\bigcap_{u\in E_\KK}\{v\in\RR^n:\scal{v}{u}\le\norm{d}^2\}.
\end{align}
The closed Euclidean ball $\bar{B}(0,\norm{d}^2)$ is the largest
ball centered at the origin contained in $\mathcal{P}_\KK$: for
every $r>\norm{d}^2$ and every $u\in E_\KK$, the point $ru$
belongs to $\bar{B}(0,r)\setminus\mathcal{P}_\KK$. Moreover, the
boundary contact set
\begin{align}\label{eq:contact}
\partial\bar{B}(0,\norm{d}^2)\cap\partial\mathcal{P}_\KK
=\norm{d}^2\,\closu E_\KK
\end{align}
is the homothetic image of $\closu E_\KK$ at scale $\norm{d}^2$.
\end{theorem}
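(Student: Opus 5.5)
The plan is to prove the three assertions in order: the polar description \cref{eq:Pnp-eq}, the maximality of the ball, and the contact-set identity \cref{eq:contact}. Each will follow from \cref{thm:nonpoly}, namely the identity $\scal{d}{u}=-\norm{d}^2$ on $E_\KK$, combined with the representation $\KK=\closu\cone(E_\KK)$ from its proof.

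\textbf{Polar description.} For the inclusion ``$\subseteq$'', take $v\in\mathcal P_\KK$ and $u\in E_\KK\subset\KK$. Then $\scal{d+v}{u}\le0$, and the key identity rewrites this as $\scal{v}{u}\le\norm{d}^2$. For ``$\supseteq$'', take $v$ in the intersection, so that $\scal{d+v}{u}\le0$ for every $u\in E_\KK$. Nonnegative combinations preserve this sign, and so do limits. Repeating the closing argument of \cref{thm:nonpoly} verbatim with $v$ in place of a ball vector then gives $\scal{d+v}{z}\le0$ for all $z\in\KK$.

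\textbf{Maximality.} The inclusion $\bar B(0,\norm{d}^2)\subseteq\mathcal P_\KK$ is \cref{thm:nonpoly}. If $r>\norm{d}^2$ and $u\in E_\KK$, then $\norm{ru}=r$ while $\scal{ru}{u}=r>\norm{d}^2$, so $ru\notin\mathcal P_\KK$ by \cref{eq:Pnp-eq}. Nonemptiness of $E_\KK$ is guaranteed because $\KK$ is nontrivial and pointed, hence has an extreme ray.

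\textbf{Contact set.} This is the step needing care, since $E_\KK$ need not be closed and $\partial\mathcal P_\KK$ must be handled without a finite description. Define $g(v):=\sup_{u\in E_\KK}\scal{v}{u}=\max_{u\in\closu E_\KK}\scal{v}{u}$. This is a finite, continuous, convex support function of the compact set $\closu E_\KK$, and $\mathcal P_\KK=\{g\le\norm{d}^2\}$. Since $d\ne0$, the point $0$ satisfies $g(0)=0<\norm{d}^2$, so Slater's condition holds. Hence $\inte\mathcal P_\KK=\{g<\norm{d}^2\}$ and $\partial\mathcal P_\KK=\{g=\norm{d}^2\}$.

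For ``$\supseteq$'', take $u\in\closu E_\KK$, which is a unit vector. Then $g(\norm{d}^2u)\ge\norm{d}^2$. Cauchy--Schwarz gives $g(\norm{d}^2u)\le\norm{d}^2$, so equality holds and $\norm{d}^2u$ lies on both boundaries.

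For ``$\subseteq$'', take $\norm{v}=\norm{d}^2$ with $g(v)=\norm{d}^2$. By compactness the supremum is attained at some $u\in\closu E_\KK$, so $\scal{v}{u}=\norm{d}^2=\norm{v}\norm{u}$. The equality case of Cauchy--Schwarz forces $v=\norm{d}^2u$.

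The main obstacle is to justify $\partial\mathcal P_\KK=\{g=\norm{d}^2\}$ rigorously, through continuity of $g$ and the Slater point $0$, together with the passage from $E_\KK$ to its closure. Both rest on the compactness of the unit sphere.
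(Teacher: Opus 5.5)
Your proposal is correct and follows the same three-step route as the paper: the polar description via $\KK=\closu\cone(E_\KK)$ and the key identity, maximality via the points $ru$, and the contact set via the equality case of Cauchy--Schwarz. The one difference is in the contact-set step. You replace the paper's approximating sequences $u_k\in E_\KK$ by an exact maximizer of the support function $g$ over the compact set $\closu E_\KK$, and you explicitly prove $\partial\mathcal{P}_\KK=\{g=\norm{d}^2\}$ from continuity and the Slater point $0$; the paper uses this fact without proof when it asserts that every boundary point admits $u_k$ with $\scal{v}{u_k}\to\norm{d}^2$.
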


\begin{proof}
The proof of \cref{thm:nonpoly} establishes
$\KK=\closu\cone(E_\KK)$. By bilinearity of the inner product and
continuity, the inclusion $d+v\in\Kpolar$, that is,
$\scal{d+v}{z}\le 0$ for every $z\in \KK$, is equivalent to
$\scal{d+v}{u}\le 0$ for every $u\in E_\KK$. Substituting
$\scal{d}{u}=-\norm{d}^2$ from \cref{thm:nonpoly} gives
$\scal{v}{u}\le\norm{d}^2$, which is~\cref{eq:Pnp-eq}.

Containment $\bar{B}(0,\norm{d}^2)\subset\mathcal{P}_\KK$ is
\cref{thm:nonpoly} itself. For maximality, fix $u\in E_\KK$ and
$r>\norm{d}^2$. The point $v=ru$ has $\norm{v}=r$ and
$\scal{v}{u}=r>\norm{d}^2$, so $v\notin\mathcal{P}_\KK$
by~\cref{eq:Pnp-eq}.

Let $u\in\closu E_\KK$ and pick $u_k\in E_\KK$ with $u_k\to u$.
Each $v_k:=\norm{d}^2 u_k$ satisfies $\norm{v_k}=\norm{d}^2$ (since
$\norm{u_k}=1$) and $\scal{v_k}{u_k}=\norm{d}^2\norm{u_k}^2=\norm{d}^2$,
so $v_k\in\partial\bar{B}(0,\norm{d}^2)\cap\partial\mathcal{P}_\KK$.
The limit $v:=\norm{d}^2 u$ has $\norm{v}=\norm{d}^2$ by continuity
of the norm, lies in $\partial\mathcal{P}_\KK$ since
$\partial\mathcal{P}_\KK$ is closed, and is therefore in the
contact set on the left of~\cref{eq:contact}.

Let $v\in\partial\bar{B}(0,\norm{d}^2)\cap\partial\mathcal{P}_\KK$.
The boundary description $\partial\mathcal{P}_\KK$ together
with~\cref{eq:Pnp-eq} gives a sequence $u_k\in E_\KK$ with
$\scal{v}{u_k}\to\norm{d}^2$. Cauchy--Schwarz gives
$\scal{v}{u_k}\le\norm{v}\norm{u_k}=\norm{d}^2$, so the inner
products approach the Cauchy--Schwarz upper bound:
\begin{align}
\biggl\|\frac{v}{\norm{v}}-u_k\biggr\|^2
=2-2\biggl\langle\frac{v}{\norm{v}},u_k\biggr\rangle
=2-\frac{2}{\norm{d}^2}\scal{v}{u_k}\longrightarrow 0.
\end{align}
Hence $u_k\to v/\norm{d}^2$, which therefore lies in
$\closu E_\KK$, and $v=\norm{d}^2\cdot(v/\norm{d}^2)\in\norm{d}^2\,\closu E_\KK$.
\end{proof}

Geometrically, the inscribed ball touches the boundary of the
admissible set along a copy of $E_\KK$ scaled by $\norm{d}^2$. For
polyhedral $\KK$ the contact set is the finite set
$\{\norm{d}^2 u^i\}_{i=1}^p$. For $\mathcal{L}^n$ it is an
$(n-2)$-dimensional sphere on the boundary of $-\mathcal{L}^n$
(visible in~\cref{fig:soc} as the dashed circle when $n=3$), and
for $\Smat^n_+$ it is the rank-one trace-$1/n$ slice
$\{vv^\top/n:\norm{v}=1\}\subset-\Smat^n_+$.

The polar description~\cref{eq:Pnp-eq} immediately yields a
non-polyhedral version of \cref{cor:dirdepth}.

\begin{corollary}[directional depth in the non-polyhedral case]\label{cor:dirdepth-np}
Under the hypotheses of \cref{thm:nonpoly}, set
$\rho_\KK(w):=\sup\{t\ge 0:d+tw\in\Kpolar\}$ for
$w\in\RR^n\setminus\{0\}$. Then
\begin{align}\label{eq:rho-np}
\rho_\KK(w)=
\begin{cases}
+\infty, & \sup_{u\in E_\KK}\scal{w}{u}\le 0,\\[1mm]
\dfrac{\norm{d}^2}{\sup_{u\in E_\KK}\scal{w}{u}}, & \text{otherwise.}
\end{cases}
\end{align}
The infimum of $\rho_\KK$ over the unit sphere equals $\norm{d}^2$
and is attained on $\closu E_\KK$.
\end{corollary}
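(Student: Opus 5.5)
The plan is to mirror the proof of \cref{cor:dirdepth}, replacing \cref{prop:exact} by the polar description~\cref{eq:Pnp-eq} of \cref{thm:sharp}. Write $\sigma(w):=\sup_{u\in E_\KK}\scal{w}{u}$. This is finite, since $\norm{u}=1$ gives $\abs{\sigma(w)}\le\norm{w}$, and \cref{thm:sharp} guarantees $E_\KK\neq\emptyset$ under the standing hypotheses. By~\cref{eq:Pnp-eq}, for $t\ge 0$ we have $d+tw\in\Kpolar$ if and only if $t\scal{w}{u}\le\norm{d}^2$ for every $u\in E_\KK$, that is, if and only if $t\,\sigma(w)\le\norm{d}^2$. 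The set of admissible $t\ge 0$ is therefore the interval $\{t\ge 0: t\sigma(w)\le\norm{d}^2\}$.

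I will split into two cases. If $\sigma(w)\le 0$, every $t\ge 0$ qualifies because $\norm{d}^2>0$, so $\rho_\KK(w)=+\infty$. If $\sigma(w)>0$, the admissible set is exactly $[0,\norm{d}^2/\sigma(w)]$, and its supremum is the stated value. No attainment of the supremum over $E_\KK$ is needed at this stage, because the inequality $t\sigma(w)\le\norm{d}^2$ is equivalent to the family of pointwise inequalities.

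For the infimum over the unit sphere, take $\norm{w}=1$. Cauchy--Schwarz gives $\sigma(w)\le 1$, so $\rho_\KK(w)\ge\norm{d}^2$ both when $\sigma(w)>0$ and trivially when $\rho_\KK(w)=+\infty$. For $w\in\closu E_\KK$, pick $u_k\in E_\KK$ with $u_k\to w$. Then $\scal{w}{u_k}\to\norm{w}^2=1$, so $\sigma(w)=1$ and $\rho_\KK(w)=\norm{d}^2$. Hence the infimum equals $\norm{d}^2$ and is attained at every point of $\closu E_\KK$, which is nonempty.

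The main subtle point, if one wants the sharper claim that attainment happens \emph{only} on $\closu E_\KK$, is the converse. Suppose $\rho_\KK(w)=\norm{d}^2$ with $\norm{w}=1$. Then $\sigma(w)=1$, so there are $u_k\in E_\KK$ with $\scal{w}{u_k}\to 1$. The identity $\norm{w-u_k}^2=2-2\scal{w}{u_k}\to 0$, exactly as in the contact-set argument of \cref{thm:sharp}, then forces $u_k\to w$, so $w\in\closu E_\KK$. Equivalently, $\norm{d}^2 w$ lies in the contact set~\cref{eq:contact}. This last step is the only one beyond direct substitution, and I would handle it by citing that argument rather than repeating it.
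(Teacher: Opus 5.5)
Your proposal is correct and follows the paper's proof: substitute $v=tw$ into the polar description~\cref{eq:Pnp-eq}, split on the sign of $\sup_{u\in E_\KK}\scal{w}{u}$, and use Cauchy--Schwarz plus approximation of $w\in\closu E_\KK$ by points of $E_\KK$ for the infimum over the unit sphere. Your converse step, showing the infimum is attained only on $\closu E_\KK$ via $\norm{w-u_k}^2=2-2\scal{w}{u_k}\to 0$, goes slightly beyond what the paper's proof records, and it is sound.
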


\begin{proof}
By~\cref{eq:Pnp-eq}, $d+tw\in\Kpolar$ iff
$t\scal{w}{u}\le\norm{d}^2$ for every $u\in E_\KK$. If
$$\sup_{u\in E_\KK}\scal{w}{u}\le 0$$ then every $t\ge 0$ qualifies.
Otherwise the binding constraint sits at the supremum (a maximum
on $\closu E_\KK$ when this set is compact). Over the unit sphere,
$$\sup_w\sup_{u\in E_\KK}\scal{w}{u}=1,$$ attained when
$w\in\closu E_\KK$, so the infimum of $\rho_\KK$ is $\norm{d}^2$.
\end{proof}

The two extremes have a structural reading. Directions $w$ along
$\closu E_\KK$ are the worst extreme rays of $-\KK$ as seen from
$d$, and they realize the inscribed-ball radius $\norm{d}^2$;
directions $w\in\Kpolar$ are insensitive to the perturbation,
giving infinite depth.

\begin{remark}[the polar description as a generalized polyhedron]
\label{rem:Pnp-poly}
Equation~\cref{eq:Pnp-eq} writes $\mathcal{P}_\KK$ as an
intersection of half-spaces indexed by $E_\KK$. When $E_\KK$ is
finite this is a polyhedron in the usual sense; for
$\mathcal{L}^n$ it is the intersection of a continuous family of
tangent half-spaces, recovering the cone
$\{(v_x,v_t):v_t+\norm{v_x}\le\norm{d}^2\}$ from the direct
verification in \cref{ex:soc}; for $\Smat^n_+$ it is the
spectrahedron-like set
$\{V\in\Smat^n:\Tr(VW)\le\norm{d}^2\norm{W}_F\text{ for all rank-one }W\succeq 0\}$,
which simplifies to the Frobenius ball-plus-trace-cone description
underlying \cref{ex:psd}.
\end{remark}

The two canonical instances follow. Both have continuous
extreme-ray sets and both satisfy the hypothesis
of~\cref{thm:nonpoly}, since $E_\KK$ lives on a hyperplane at
positive distance from the origin.

\begin{example}[second-order cone]\label{ex:soc}
Let $\mathcal{L}^n=\{(x,t)\in\RR^{n-1}\times\RR:\norm{x}\le t\}$.
The extreme rays of~$\mathcal{L}^n$ are
$\RR_+(\omega,\norm{\omega})$ with
$\omega\in\RR^{n-1}\setminus\{0\}$, so after normalization
\begin{align}
E_{\mathcal{L}^n}=\bigl\{(\omega/\sqrt 2,1/\sqrt 2):\omega\in S^{n-2}\bigr\}\subset\bigl\{(x,t):t=1/\sqrt 2\bigr\}.
\end{align}
Hence $\closu\aff(E_{\mathcal{L}^n})=\RR^{n-1}\times\{1/\sqrt 2\}$,
the projection of the origin onto this hyperplane is
$(0,1/\sqrt 2)$, and
\begin{align}
d=(0,-1/\sqrt 2),\qquad\norm{d}^2=\tfrac{1}{2}.
\end{align}
A direct check confirms~\cref{thm:nonpoly}: writing $v=(v_x,v_t)$,
the inclusion $d+v\in(\mathcal{L}^n)^\circ=-\mathcal{L}^n$ amounts
to $v_t+\norm{v_x}\le 1/\sqrt 2$, and the worst case
$v_t=\norm{v_x}=\norm{v}/\sqrt 2$ gives
$v_t+\norm{v_x}=\sqrt 2\norm{v}\le 1/\sqrt 2$ exactly when
$\norm{v}\le \tfrac12$. The case $n=3$ is depicted
in~\cref{fig:soc}.
\end{example}

\begin{figure}[ht]
    \centering
    \includegraphics[width=0.8\textwidth]{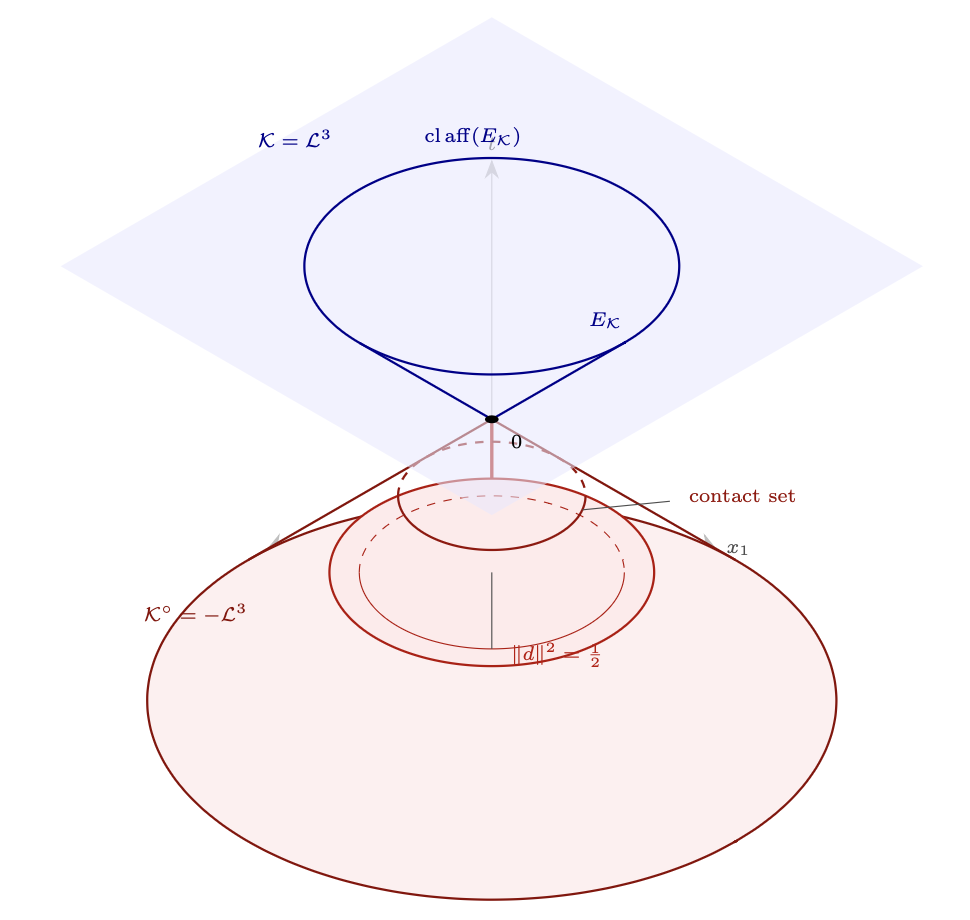}
    \caption{The second-order cone case of \cref{ex:soc} for $n=3$. The
upper (blue) cone is $\KK=\mathcal{L}^3$; its normalized
extreme-ray set $E_\KK$ is the unit circle on the cone boundary at
height $t=1/\sqrt 2$. The horizontal plane through that circle is
$\closu\aff(E_\KK)$; it lies at positive distance from the origin,
so the hypothesis of \cref{thm:nonpoly} is satisfied. The polar
cone $\Kpolar=-\mathcal{L}^3$ (red, opening downward) contains the
projection-based direction $d=(0,0,-1/\sqrt 2)$ and the inscribed
Euclidean ball of $3$D radius $\norm{d}^2=\tfrac12$ around it. The
ball is internally tangent to the lateral surface of $\Kpolar$
along the circle at height $t=-1/(2\sqrt 2)$, the contact set
predicted by \cref{thm:sharp}. Front halves of the equator and
contact circles are drawn solid; back halves dashed.}
\label{fig:soc}
\end{figure}
In the second-order case, $\norm{d}^2=\tfrac12$ is independent of
$n$, and the inscribed-ball bound is attained on a whole
$(n-2)$-dimensional sphere, the locus
$\{v:v_t=\norm{v_x}=1/(2\sqrt 2)\}$, which becomes the dashed
tangent circle of~\cref{fig:soc} when $n=3$. The
positive-semidefinite case behaves differently: the dimension
enters the value of $\norm{d}^2$ explicitly.

\begin{example}[positive-semidefinite cone]\label{ex:psd}
Equip the symmetric matrices $\Smat^n$ with the Frobenius inner
product $\scal{X}{Y}=\Tr(XY)$. Extreme rays of $\Smat^n_+$ are
spanned by rank-one positive semidefinite matrices, so
\begin{align}
E_{\Smat^n_+}=\bigl\{vv^\top/\norm{v}^2:v\in\RR^n\setminus\{0\}\bigr\}\subset\{X\in\Smat^n:\Tr X=1\}.
\end{align}
The set inclusion is in fact an equality of affine hulls:
$\{vv^\top:\norm{v}=1\}$ spans $\Smat^n$ as a real vector space,
since taking $v=e_i$ recovers $e_ie_i^\top$ and
$v=(e_i+e_j)/\sqrt 2$ for $i\ne j$ recovers
$\tfrac12(e_ie_i^\top+e_je_j^\top)+\tfrac12(e_ie_j^\top+e_je_i^\top)$,
so the symmetric off-diagonal generators lie in the linear span.
The differences therefore span $\{X\in\Smat^n:\Tr X=0\}$, and
adding any single $vv^\top$ recovers the trace-one hyperplane:
$\aff(E_{\Smat^n_+})=\{X\in\Smat^n:\Tr X=1\}$. The Frobenius-closest
matrix in this hyperplane to the origin is $I/n$, giving
\begin{align}
d=-I/n,\qquad\norm{d}^2=\tfrac{1}{n}.
\end{align}
The conclusion of~\cref{thm:nonpoly} is then exactly the statement
that $I/n-V\succeq 0$ whenever $\norm{V}_F\le 1/n$, which is
confirmed by Weyl's inequality~\cite{HornJohnson:2013}:
$\lambda_{\min}(I/n-V)\ge 1/n-\norm{V}_2\ge 1/n-\norm{V}_F\ge 0$.
\end{example}

The orthant of \cref{ex:orthant}, the second-order cone of
\cref{ex:soc}, and the positive-semidefinite cone of \cref{ex:psd}
are the three classical instances of {simple symmetric cones},
that is, cones of squares in simple Euclidean Jordan algebras.
(The orthant case extends to $\RR^n_+\subset\RR^n$ with
$E_\KK=\{e_1,\ldots,e_n\}$, $d=-(1/n)\sum_i e_i$, and
$\norm{d}^2=1/n$.) The next proposition treats all three uniformly
through the Pierce decomposition and gives the closed form
$\norm{d}^2=1/r$ in the Jordan rank $r$.

\begin{proposition}[symmetric cones satisfy the hypothesis]\label{prop:symmetric}
Let $V$ be a simple Euclidean Jordan algebra of rank $r$ with unit
element $e$ and Jordan trace $\tau$, equipped with the trace inner
product $\scal{x}{y}_{\!J}:=\tau(x\circ y)$, and let $\KK\subset V$
be the cone of squares. Then{\em :}
\begin{itemize}
\item[ {\bf (i)}] The unit-normalized extreme-ray set $E_\KK$ (with
respect to $\scal{\cdot}{\cdot}_{\!J}$) is the set of primitive
idempotents of~$V$.
\item[ {\bf (ii)}] $E_\KK\subset\{x\in V:\tau(x)=1\}$, hence
$0\notin\aff(E_\KK)$ and the hypothesis of \cref{thm:nonpoly} is
satisfied.
\item[ {\bf (iii)}] $d=-e/r$ and $\norm{d}_{\!J}^2=1/r$.
\end{itemize}
\end{proposition}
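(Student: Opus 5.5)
The proof has three steps: identify the extreme rays of $\KK$, read off the normalization they force, and locate the projection of the origin onto the trace-one hyperplane.

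\textbf{Step (i).} By the standard Faraut--Kor\'anyi facts~\cite{FK:1994}, every $x\in\KK$ has a spectral decomposition $x=\sum_{j=1}^r\lambda_j c_j$ with $\lambda_j\ge 0$ and $\{c_1,\ldots,c_r\}$ a Jordan frame. Hence every extreme ray of $\KK$ is spanned by some primitive idempotent $c$. Conversely, each such $\RR_+c$ is extreme. To see this, suppose $c=x+y$ with $x,y\in\KK$. Apply the Peirce decomposition relative to $c$ and use that the Peirce space $V(c,1)=\RR c$ is one-dimensional for primitive $c$. Then $x\in\KK$ together with $x\le c$ forces $x\in V(c,1)=\RR_+c$. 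This is the standard identification; it could simply be cited, with the sketch above as justification.

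Next comes the normalization. Every idempotent satisfies $c\circ c=c$, and a primitive idempotent has $\tau(c)=1$ because it has a single spectral value $1$ with multiplicity one. Therefore
\[
\scal{c}{c}_{\!J}=\tau(c\circ c)=\tau(c)=1.
\]
So primitive idempotents are already unit vectors, and $E_\KK$ is exactly the set of primitive idempotents. This proves (i).

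\textbf{Step (ii).} It follows at once from $\tau(c)=1$: $E_\KK$ lies in the affine hyperplane $H=\{x:\tau(x)=1\}$. Since $\tau(x)=\scal{x}{e}_{\!J}$, $H$ is a closed hyperplane that does not contain $0$. Hence $0\notin\closu\aff(E_\KK)$. Equivalently, \cref{lem:hyp}(ii) holds with witness $(\nu,c)=(e,1)$.

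\textbf{Step (iii).} I would first prove that $\aff(E_\KK)=H$; this is the main obstacle. Projecting onto $H$ itself would give the wrong answer if the affine hull were a proper subspace of $H$. The plan mirrors the matrix argument of \cref{ex:psd}: show that the linear span of the primitive idempotents is all of $V$. Every $x\in V$ has a spectral decomposition $x=\sum_j\lambda_j c_j$ with primitive $c_j$, so the span is $V$. Consequently the differences of elements of $E_\KK$ span $\ker\tau$, which is the linear part of $H$, and adding a single primitive idempotent recovers $H$. Simplicity is not essential for this spanning argument; it matters only in that $\tau$ is the trace of the simple algebra with rank $r$.

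Finally, the projection of $0$ onto $H=\{x:\scal{x}{e}_{\!J}=1\}$ is $e/\norm{e}_{\!J}^2$. Here
\[
\norm{e}_{\!J}^2=\tau(e\circ e)=\tau(e)=r,
\]
since $e=\sum_{j=1}^r c_j$ for any Jordan frame. Therefore $\proj_H(0)=e/r$, and by \cref{eq:dnonpoly} we get $d=-e/r$ and $\norm{d}_{\!J}^2=r/r^2=1/r$. As a consistency check, this reproduces the values $1/n$ for $\RR^n_+$ and $\Smat^n_+$, and $1/2$ for $\mathcal{L}^n$, which has rank $2$.
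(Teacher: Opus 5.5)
Your proof is correct. Steps (i) and (ii) match the paper, which simply cites Faraut--Kor\'anyi for the extreme-ray identification and then uses $\tau(c)=1$.

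Your Peirce sketch for the converse in (i) leaves one step unjustified: that $0\le x\le c$ forces $x\in V(c,1)$. It follows from self-duality, which gives $\scal{x}{e-c}_{\!J}=0$, together with the fact that orthogonal elements of $\KK$ have zero Jordan product. As you say, this can be cited.

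Step (iii) takes a genuinely different route from the paper.
\begin{itemize}
\item \emph{Your route.} You first prove $\aff(E_\KK)=H$. The spectral theorem shows the primitive idempotents span $V$, so their differences span $\ker\tau$. Only then do you project onto $H$.
\item \emph{The paper's route.} It never determines $\aff(E_\KK)$. A Jordan frame gives $e/r=\tfrac1r\sum_{i=1}^r c_i\in\conv(E_\KK)\subset\aff(E_\KK)\subset H$. Since $e/r=\proj_H(0)$ already lies in the smaller affine set, it is also the projection of $0$ onto that set.
\end{itemize}

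So the ``main obstacle'' you flag is not really there. A proper affine subspace $A\subset H$ could change the answer only if $\proj_H(0)\notin A$. The identity $e=\sum_j c_j$, which you already use to compute $\norm{e}_{\!J}^2=r$, rules that out.

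Your route costs one extra lemma, which is easy and correct. In return it gives more information: the exact identification $\aff(E_\KK)=\{x:\tau(x)=1\}$, the Jordan-algebraic analogue of the affine-hull computation in \cref{ex:psd}. The paper's route is shorter and needs only the existence of a single Jordan frame summing to $e$.
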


\begin{proof}
(i) Extreme rays of the cone $\KK$ are spanned by primitive
idempotents \cite[Prop.~III.1.4]{FK:1994}. A primitive idempotent
$c$ satisfies $c\circ c=c$, so $\norm{c}_{\!J}^2=\tau(c\circ c)=\tau(c)=1$
by $\tau(c)=1$ for primitive idempotents in a rank-$r$ Jordan
algebra \cite[Prop.~III.1.5]{FK:1994}. Hence primitive idempotents
are already $\scal{\cdot}{\cdot}_{\!J}$-unit and form $E_\KK$.

(ii) Immediate from~(i) and $\tau(c)=1$.

(iii) The Jordan spectral theorem gives a Jordan frame
$\{c_1,\ldots,c_r\}$ of orthogonal primitive idempotents with
$\sum_{i=1}^r c_i=e$ \cite[Thm.~III.1.2]{FK:1994}. Hence
$e/r=(1/r)\sum c_i\in\conv(E_\KK)\subset\aff(E_\KK)$. We claim
$-d=e/r$. Since $\aff(E_\KK)\subset\{x:\scal{x}{e}_{\!J}=1\}$ (using
$\tau(x)=\scal{x}{e}_{\!J}$ as $x\circ e=x$), it suffices to show
that $e/r$ is the orthogonal projection of $0$ onto the hyperplane
$H:=\{x:\scal{x}{e}_{\!J}=1\}$, since the projection of $0$ onto a
smaller affine set containing $\proj_H(0)$ is itself. Lagrange
optimality on $H$ gives the projection in the form $\lambda e$
with $\lambda\norm{e}_{\!J}^2=1$, and
$\norm{e}_{\!J}^2=\tau(e\circ e)=\tau(e)=r$, so $\lambda=1/r$ and
$\proj_H(0)=e/r$. Therefore $-d=e/r$, $d=-e/r$, and
$\norm{d}_{\!J}^2=(1/r^2)\norm{e}_{\!J}^2=r/r^2=1/r$.
\end{proof}

\Cref{prop:symmetric} unifies the three classical cases through the
Jordan rank $r$. With their canonical inner products:

For $\KK=\RR^n_+$, the Jordan algebra $V=\RR^n$ has
componentwise product, $\tau(x)=\sum_i x_i$, and
$\scal{\cdot}{\cdot}_{\!J}$ coincides with the standard Euclidean
inner product. Rank $r=n$ gives $d=-(1,\ldots,1)/n$ and
$\norm{d}^2=1/n$.

For $\KK=\Smat^n_+$, the Jordan algebra $V=\Smat^n$ has
product $X\circ Y=(XY+YX)/2$, $\tau(X)=\Tr X$, and
$\scal{\cdot}{\cdot}_{\!J}$ coincides with the Frobenius inner
product. Rank $r=n$ gives $d=-I/n$ and $\norm{d}^2=1/n$, recovering
\cref{ex:psd}.

For $\KK=\mathcal{L}^n$, the Lorentz Jordan algebra has
$(x,t)\circ(y,s)=(ty+sx,\scal{x}{y}+ts)$, unit element $e=(0,1)$,
$\tau((x,t))=2t$, and $\scal{\cdot}{\cdot}_{\!J}=2\scal{\cdot}{\cdot}$
on $\RR^n$. Rank $r=2$ gives $\norm{d}_{\!J}^2=1/2$. With Euclidean
normalization of $E_\KK$ (as in \cref{ex:soc}, which uses
$\scal{\cdot}{\cdot}$ rather than $\scal{\cdot}{\cdot}_{\!J}$), the
direction $d=(0,-1/\sqrt 2)$ has $\norm{d}^2=1/2=1/r$ as well,
consistent with the proposition up to the inner-product
normalization.

The proposition extends, with the same Jordan-frame proof, to the
remaining simple symmetric cones: the Hermitian complex and
quaternionic positive-semidefinite cones, and the exceptional
$27$-dimensional Albert cone. In every case
$\norm{d}_{\!J}^2=1/r$.

\subsection{Beyond symmetric cones: scope of the hypothesis}\label{sec:scope}

\Cref{prop:symmetric} settles the symmetric case but says nothing
about cones that are not self-dual or not homogeneous. We record
in this subsection three concrete results that test the reach of
the hypothesis $0\notin\closu\aff(E_\KK)$: a stability property
under direct products, a positive verification for the doubly
nonnegative cone (which is non-symmetric for $n\ge 3$), and an
explicit obstruction for the $p$-cones with $p\ne 2$.

\begin{proposition}[stability under direct products]\label{prop:dirsum}
Let $\KK_1\subset\RR^{n_1}$ and $\KK_2\subset\RR^{n_2}$ be
nontrivial closed convex pointed cones with
$0\notin\closu\aff(E_{\KK_1})$ and $0\notin\closu\aff(E_{\KK_2})$,
and let $d_1,d_2$ be the corresponding circumcentric directions
with squared norms $\delta_1:=\norm{d_1}^2$, $\delta_2:=\norm{d_2}^2$.
Then $\KK_1\times\KK_2\subset\RR^{n_1+n_2}$ satisfies
$0\notin\closu\aff(E_{\KK_1\times\KK_2})$, with circumcentric
direction
\begin{align}\label{eq:dirsum-d}
d=\Bigl(\frac{\delta_2}{\delta_1+\delta_2}\,d_1,\ \frac{\delta_1}{\delta_1+\delta_2}\,d_2\Bigr)
\end{align}
and squared norm
\begin{align}\label{eq:dirsum-norm}
\frac{1}{\norm{d}^2}=\frac{1}{\delta_1}+\frac{1}{\delta_2}.
\end{align}
\end{proposition}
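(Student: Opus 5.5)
Our first step is to identify the extreme rays of the product. A standard face argument shows that the extreme rays of $\KK_1\times\KK_2$ are exactly the rays $\RR_+(u,0)$ with $\RR_+u$ extreme in $\KK_1$, together with the rays $\RR_+(0,w)$ with $\RR_+w$ extreme in $\KK_2$. To see this, suppose $(x,y)$ spans an extreme ray. It is the sum of $(x,0)$ and $(0,y)$, and both summands lie in the product cone, so extremality forces one of them to vanish. A ray of the form $(u,0)$ is then extreme in the product iff $u$ is extreme in $\KK_1$, since any decomposition of $(u,0)$ within the product must have zero second components. Hence
\begin{align*}
E_{\KK_1\times\KK_2}=\bigl(E_{\KK_1}\times\{0\}\bigr)\cup\bigl(\{0\}\times E_{\KK_2}\bigr).
\end{align*}

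Next we verify the hypothesis through \cref{lem:hyp}(ii). We need a single vector whose inner product with every element of the extremal section is the same nonzero constant. \Cref{thm:nonpoly} gives $\scal{d_1}{u}=-\delta_1$ on $E_{\KK_1}$ and $\scal{d_2}{w}=-\delta_2$ on $E_{\KK_2}$. We therefore look for $d=(\alpha d_1,\beta d_2)$ with $\alpha\delta_1=\beta\delta_2=:c$. Then $\scal{d}{\cdot}=-c$ on all of $E_{\KK_1\times\KK_2}$, and since $c\ne 0$ the hypothesis holds.

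The delicate point is that the canonical witness in \cref{lem:hyp} requires $d$ to be exactly $-\proj_{\closu\aff(E)}(0)$, not just some witness. For that we show that $-d$ lies in $\closu\aff(E)$ and that $\scal{x}{-d}=\norm{d}^2$ on $E$. These two facts together characterize the projection: the second says $-d$ is orthogonal to every difference of points in $E$, hence to the direction space of the affine hull, and the first places $-d$ in the hull.

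Membership follows because $-d_1\in\closu\aff(E_{\KK_1})$ and $-d_2\in\closu\aff(E_{\KK_2})$. Consequently $(-d_1,0)$ and $(0,-d_2)$ both lie in $\closu\aff(E)$, and so does every affine combination
\begin{align*}
t(-d_1,0)+(1-t)(0,-d_2)=(-t d_1,-(1-t)d_2).
\end{align*}
We take $\alpha=t$ and $\beta=1-t$. Solving $t\delta_1=(1-t)\delta_2$ gives $t=\delta_2/(\delta_1+\delta_2)$, which is \eqref{eq:dirsum-d}.

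The orthogonality condition requires $c=\norm{d}^2$. This is automatic: $\norm{d}^2=t^2\delta_1+(1-t)^2\delta_2=t\cdot c+(1-t)\cdot c=c$, using $t\delta_1=(1-t)\delta_2=c$. Finally,
\begin{align*}
\norm{d}^2=c=\frac{\delta_1\delta_2}{\delta_1+\delta_2},
\end{align*}
which is equivalent to \eqref{eq:dirsum-norm}.

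We expect the main obstacle to be the extreme-ray decomposition in the first step, specifically making sure no mixed rays with both components nonzero are extreme. The argument above handles this by splitting $(x,y)$ as $(x,0)+(0,y)$. Everything after that is bookkeeping with \cref{lem:hyp} and \cref{thm:nonpoly}.
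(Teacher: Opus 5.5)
Your argument is correct, but it reaches the formula by a different route than the paper.

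The paper parametrizes $\closu\aff(E_{\KK_1\times\KK_2})$ as the points $(\alpha u,(1-\alpha)v)$ with $u\in\closu\aff(E_{\KK_1})$, $v\in\closu\aff(E_{\KK_2})$, $\alpha\in\RR$. It then computes the projection of the origin by direct minimization: first over $u$ and $v$ (giving $-d_1$ and $-d_2$), then over $\alpha$ in $\alpha^2\delta_1+(1-\alpha)^2\delta_2$. The hypothesis is read off from the minimizer being nonzero.

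You instead postulate the candidate from the constant-inner-product requirement of \cref{lem:hyp}(ii). You then certify it by the variational characterization of the projection: membership in the hull plus orthogonality to all differences of points of $E_{\KK_1\times\KK_2}$.

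The paper's route derives the weight $\delta_2/(\delta_1+\delta_2)$ rather than solving for it. It also displays the parallel-resistance rule as a one-dimensional weighted least-squares problem in $\alpha$.

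Your route needs only that $(-d_1,0)$ and $(0,-d_2)$ lie in the hull, so it never requires an exact description of $\closu\aff(E_{\KK_1\times\KK_2})$. This is a genuine gain, because the paper's parametrization is exact only up to closure. For $\KK_1=\RR^2_+$ and $\KK_2=\RR_+$, the point $(1,-1,1)$ lies in $\aff(E_{\KK_1\times\KK_2})=\{x_1+x_2+x_3=1\}$ but not in the parametrized set, since $\alpha=0$ forces the first block to vanish. This imprecision does not affect the minimum value, but your verification never has to confront it.

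You also justify the extreme-ray decomposition, which the paper merely asserts. Two small points should be made explicit. The step ``any decomposition of $(u,0)$ has zero second components'' uses pointedness of $\KK_2$. And $t=\delta_2/(\delta_1+\delta_2)$ is well defined because $\delta_1,\delta_2>0$ by the hypotheses on the factors.
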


\begin{proof}
The extreme rays of $\KK_1\times\KK_2$ are $\RR_+(u,0)$ for
$u\in E_{\KK_1}$ together with $\RR_+(0,v)$ for $v\in E_{\KK_2}$,
both already Euclidean unit, so
$E_{\KK_1\times\KK_2}=(E_{\KK_1}\times\{0\})\cup(\{0\}\times E_{\KK_2})$.
Closing under affine combinations,
\begin{align*}
\closu\aff(E_{\KK_1\times\KK_2})=\bigl\{(\alpha\, u,\,(1-\alpha)\, v):
u\in\closu\aff(E_{\KK_1}),\ v\in\closu\aff(E_{\KK_2}),\ \alpha\in\RR\bigr\}.
\end{align*}
The squared norm of such a point is
$\alpha^2\norm{u}^2+(1-\alpha)^2\norm{v}^2$, which is minimized
over $u\in\closu\aff(E_{\KK_1})$ at $u=-d_1$ with
$\norm{u}^2=\delta_1$, and similarly for $v$ at $-d_2$ with
$\norm{v}^2=\delta_2$. The remaining one-dimensional minimization
over $\alpha$ of $\alpha^2\delta_1+(1-\alpha)^2\delta_2$ gives
$\alpha=\delta_2/(\delta_1+\delta_2)$ and minimum value
$\delta_1\delta_2/(\delta_1+\delta_2)$. Hence
$-d=\bigl(\alpha\,(-d_1),(1-\alpha)\,(-d_2)\bigr)$ and
$\norm{d}^2=\delta_1\delta_2/(\delta_1+\delta_2)$, which
is~\cref{eq:dirsum-d}--\cref{eq:dirsum-norm}. The minimizer is
nonzero, so $0\notin\closu\aff(E_{\KK_1\times\KK_2})$.
\end{proof}

\Cref{prop:dirsum} iterates immediately: a finite product
$\KK_1\times\cdots\times\KK_L$ of cones satisfying the hypothesis
satisfies it, with $1/\norm{d}^2=\sum_{\ell=1}^L 1/\delta_\ell$,
the parallel-resistance formula. The orthant
$\RR^n_+=(\RR_+)^n$ falls out as the $L=n$ case with
$\delta_\ell=1$, recovering $\norm{d}^2=1/n$ from
\cref{prop:symmetric}.

\begin{example}[doubly nonnegative cone]\label{ex:dnn}
The {\em doubly nonnegative cone}
$$\mathrm{DNN}^n=\Smat^n_+\cap(\RR_{\ge 0})^{n\times n}$$ consists
of positive-semidefinite matrices with all entries nonnegative.
It is non-polyhedral and, for $n\ge 3$, non-symmetric: it has no
transitive automorphism group on its interior, and it does not
coincide with the cone of squares in any Euclidean Jordan
algebra~\cite{FK:1994}. The extreme rays of $\mathrm{DNN}^n$ are
spanned by rank-one matrices $vv^\top$ with $v\in\RR^n_+$ (see,
e.g., the description of completely positive and doubly nonnegative
cones in Berman and Shaked-Monderer~\cite{BSM:2003}). Equipped with
the Frobenius inner product, the unit-normalized extremal section
is
\begin{align}
E_{\mathrm{DNN}^n}=\bigl\{vv^\top:v\in\RR^n_+,\ \norm{v}=1\bigr\},
\end{align}
which lies on the trace-one hyperplane
$\{X\in\Smat^n:\Tr X=1\}$, since $\Tr(vv^\top)=\norm{v}^2=1$. The
closed affine hull of $E_{\mathrm{DNN}^n}$ is contained in this
hyperplane, hence avoids the origin, and the hypothesis of
\cref{thm:nonpoly} is satisfied. The matrix
$I/n=(1/n)\sum_i e_ie_i^\top$ is a convex combination of
$e_ie_i^\top\in E_{\mathrm{DNN}^n}$, so it lies in
$\aff(E_{\mathrm{DNN}^n})$, and the Frobenius-closest matrix in
the trace-one hyperplane to the origin is $I/n$ (Lagrange
optimality, as in~\cref{ex:psd}). Hence
\begin{align}
d=-I/n,\qquad\norm{d}^2=\tfrac{1}{n},
\end{align}
the same value as for $\Smat^n_+$. \Cref{thm:nonpoly} states that
every symmetric $V\in\Smat^n$ with $\norm{V}_F\le 1/n$ satisfies
$V-I/n\in(\mathrm{DNN}^n)^\circ$, that is,
$\Tr\bigl((V-I/n)X\bigr)\le 0$ for every $X\in\mathrm{DNN}^n$.
The polar of an intersection contains the union of the polars,
so $(\mathrm{DNN}^n)^\circ\supseteq -\Smat^n_+\cup-(\RR_{\ge 0})^{n\times n}$;
the unified construction therefore captures certificates that
neither of the two defining cones provides on its own.
\end{example}

The example illustrates a useful invariance: $\norm{d}^2$ depends
only on the affine geometry of $E_\KK$, not on the algebraic
structure of $\KK$. The doubly nonnegative cone has the same
$\norm{d}^2$ as the positive-semidefinite cone in which it sits,
even though their interiors differ markedly.

On the obstruction side, the polyhedral counterexample
\cref{ex:degen} below shows that the hypothesis can fail outright.
The next proposition shows that the failure persists even for
smooth, homogeneous, and full-dimensional cones, as soon as one
steps outside the symmetric setting.

\begin{proposition}[$p$-cones do not satisfy the hypothesis for $p\ne 2$]\label{prop:pcone}
For $1<p<\infty$ and $n\ge 3$, the $p$-cone
\begin{align}
\mathcal{L}^n_p:=\bigl\{(x,t)\in\RR^{n-1}\times\RR:\norm{x}_p\le t\bigr\}
\end{align}
satisfies $0\in\aff(E_{\mathcal{L}^n_p})$ if and only if $p\ne 2$.
\end{proposition}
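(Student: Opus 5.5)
We need to prove that $0\in\aff(E)$ holds iff $p\ne2$. For $p=2$ this is \cref{ex:soc}: $E$ lies on the hyperplane $t=1/\sqrt2$, so $0\notin\aff(E)$.

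For $p\ne2$ we need the actual form of $E$. The extreme rays of $\mathcal{L}^n_p$ are $\RR_+(\omega,1)$ with $\norm{\omega}_p=1$. The base $\{(x,1):\norm{x}_p\le1\}$ is the unit $\ell_p$ ball, which is strictly convex for $1<p<\infty$, so all boundary points are extreme. After Euclidean normalization,
\begin{align*}
E=\Bigl\{\tfrac{(\omega,1)}{\sqrt{\norm{\omega}_2^2+1}}:\norm{\omega}_p=1\Bigr\}.
\end{align*}

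The key step is to exhibit finitely many elements of $E$ whose affine span contains $0$. We use the symmetry $\omega\mapsto-\omega$. The points
\begin{align*}
u_\pm=\tfrac{(\pm\omega,1)}{s(\omega)},\qquad s(\omega):=\sqrt{\norm{\omega}_2^2+1},
\end{align*}
both lie in $E$, and their midpoint is $(0,1/s(\omega))$. Hence $\aff(E)$ contains every point $(0,1/s(\omega))$ with $\norm{\omega}_p=1$.

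Since $n\ge3$, the vector $\omega$ has at least two coordinates. Take $\omega=e_1$, which gives $\norm{\omega}_2=1$, and $\omega=2^{-1/p}(e_1+e_2)$, which gives $\norm{\omega}_2^2=2^{1-2/p}$. These differ exactly when $p\ne2$. So $\aff(E)$ contains two distinct points $(0,a)$ and $(0,b)$ with $a\ne b$. The line through them is the whole axis $\{0\}\times\RR$, and in particular contains the origin. This gives $0\in\aff(E_{\mathcal{L}^n_p})$, which is even stronger than $0\in\closu\aff(E)$.

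For the converse direction at $p=2$, note that $\norm{\omega}_2=1$ forces every $s(\omega)=\sqrt2$. Then $E$ lies on the hyperplane $t=1/\sqrt2$, so $0\notin\aff(E)$, consistent with \cref{ex:soc}.

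The main obstacle is the identification of the extreme rays, which requires strict convexity of the $\ell_p$ ball for $1<p<\infty$. I would cite it as standard; it follows from the uniform or strict convexity of $\ell_p$ through the equality case of Minkowski's inequality. Only the extremality of the specific points $\pm e_1$ and $\pm2^{-1/p}(e_1+e_2)$ is actually needed. These could alternatively be verified directly as exposed points, using the supporting functional $\operatorname{sign}(\omega)|\omega|^{p-1}$ together with the equality case of Hölder's inequality.
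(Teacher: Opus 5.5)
Your proof is correct, but it argues in the primal where the paper argues in the dual.

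The paper's route is by hyperplane exclusion. It supposes $E_{\mathcal{L}^n_p}$ lies in a hyperplane $\{x:\scal{x}{\nu}=c\}$ with $\nu=(a,b)$. It then tests the identity $\scal{\omega}{a}+b\norm{\omega}_p=c\sqrt{1+\norm{\omega}_p^2}$ at $\omega=\pm e_1,\pm e_2,(e_1+e_2)/\sqrt2$ and concludes $\aff(E_{\mathcal{L}^n_p})=\RR^n$.

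Your route exhibits an explicit certificate instead. Central symmetry of the $\ell_p$ sphere puts the midpoints $(0,1/s(\omega))$ in $\aff(E_{\mathcal{L}^n_p})$. The two distinct heights from $\omega=e_1$ and $\omega=2^{-1/p}(e_1+e_2)$ then give the whole $t$-axis. So $0$ is an affine combination of four points of $E_{\mathcal{L}^n_p}$.

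What your approach buys:
\begin{itemize}
\item It proves exactly the claim $0\in\aff(E_{\mathcal{L}^n_p})$ and isolates the one fact that drives it: $\norm{\omega}_2$ is non-constant on the $\ell_p$ sphere. This is the mirror image of the paper's observation that $\norm{\omega}_p$ is non-constant on $S^{n-2}$.
\item It justifies the extreme-ray description (strict convexity, or exposedness via the equality case of H\"older), which the paper only asserts.
\end{itemize}

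What the paper's approach buys is the stronger structural conclusion $\aff(E_{\mathcal{L}^n_p})=\RR^n$. Note, however, that for $n\ge 4$ its five test points only force $c=0$, $b=0$ and $a_1=a_2=0$. One must also evaluate at $\pm e_k$ for $k\ge 3$ to eliminate the remaining components of $a$. For the proposition itself only $c=0$ is needed, and that is precisely what your construction delivers directly.
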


\begin{proof}
The extreme rays of $\mathcal{L}^n_p$ are
$\RR_+(\omega,\norm{\omega}_p)$ for
$\omega\in\RR^{n-1}\setminus\{0\}$. Restricting to Euclidean-unit
$\omega\in S^{n-2}$, the unit-normalized extremal section is
\begin{align}
E_{\mathcal{L}^n_p}=\Bigl\{\bigl(\omega,\,\norm{\omega}_p\bigr)\big/\sqrt{1+\norm{\omega}_p^2}:\omega\in S^{n-2}\Bigr\}.
\end{align}
For $p=2$, the value $\norm{\omega}_2=1$ is constant in $\omega$,
so the last coordinate of every $u\in E_{\mathcal{L}^n_p}$ equals
$1/\sqrt 2$ and $\closu\aff(E_{\mathcal{L}^n_p})$ is the
hyperplane $\{t=1/\sqrt 2\}$, recovering~\cref{ex:soc}.

For $p\ne 2$, the function $\omega\mapsto\norm{\omega}_p$ is not
constant on $S^{n-2}$, since $\norm{e_1}_p=1$ while
$\norm{(e_1+e_2)/\sqrt 2}_p=2^{1/p-1/2}\ne 1$. The map
$\omega\mapsto(\omega,\norm{\omega}_p)/\sqrt{1+\norm{\omega}_p^2}$
from $S^{n-2}$ into $\RR^n$ is therefore not contained in any
hyperplane: any such hyperplane $H=\{x:\scal{x}{\nu}=c\}$ with
$\nu=(a,b)$ would force
$\scal{\omega}{a}+b\norm{\omega}_p=c\sqrt{1+\norm{\omega}_p^2}$
on $S^{n-2}$, and evaluating at
$\omega=\pm e_1, \pm e_2,(e_1+e_2)/\sqrt 2$ yields a system whose
only solution is $a=b=c=0$. Hence
$\closu\aff(E_{\mathcal{L}^n_p})=\RR^n\ni 0$.
\end{proof}

\begin{remark}[the gap is genuine]\label{rem:pcone-gap}
The obstruction is not that $\mathcal{L}^n_p$ lacks a useful
circumcentric direction. Projecting the origin onto the affine
hull of any compact base of $\mathcal{L}^n_p$ (say the slice
$\{t=1\}$) yields one. The point is that this direction is not
the projection onto $\closu\aff(E_{\mathcal{L}^n_p})$, which is
all of $\RR^n$ when $p\ne 2$. The hypothesis is therefore a
property of the chosen extremal section rather than of the cone
itself, and the construction of \cref{thm:nonpoly} is genuinely
sharper than what generic compact bases support. Identifying the
broadest class of non-symmetric cones for which a self-dual
projection target survives is an open problem, related to the
spectral cones and the homogeneous cones with rational
characteristic exponent~\cite{FK:1994,DW:2017}.
\end{remark}

The polyhedral counterexample below complements the smooth
obstruction of \cref{prop:pcone}: it shows that even in dimension
three and with a finite extreme-ray set, the hypothesis can fail.

\begin{example}[a pointed cone where the hypothesis fails]\label{ex:degen}
Take $$\KK=\cone\{e_1,e_2,e_3,u^4\}\subset\RR^3$$ with
$u^4:=(1,1,-1)/\sqrt 3$. The vector $\mu=(2,2,1)$ satisfies
$\scal{x}{\mu}>0$ on $\KK\setminus\{0\}$, so $\KK$ is pointed. A
direct check shows that none of the four generators lies in the
conic hull of the other three, so all four are extreme rays and
$E_\KK=\{e_1,e_2,e_3,u^4\}$. The differences $e_2-e_1$, $e_3-e_1$,
$u^4-e_1$ form a $3\times 3$ matrix of determinant
$1/\sqrt 3-1\ne 0$, so the four points are affinely independent
in $\RR^3$ and $\aff(E_\KK)=\RR^3\ni 0$ (see~\cref{fig:degen}).
The construction~\cref{eq:dnonpoly} therefore returns $d=0$,
\cref{thm:nonpoly} reduces to the trivial $v=0$, and the
inscribed-ball estimate is vacuous. By \cref{lem:hyp}, no nonzero
$\nu$ can have constant inner product with all four generators of
$E_\KK$, although $\mu$ above gives a uniformly positive lower
bound. Pointedness is therefore strictly weaker than the
hypothesis of~\cref{thm:nonpoly}.
\end{example}

\begin{figure}[ht]
    \centering
    \includegraphics[width=0.5\textwidth]{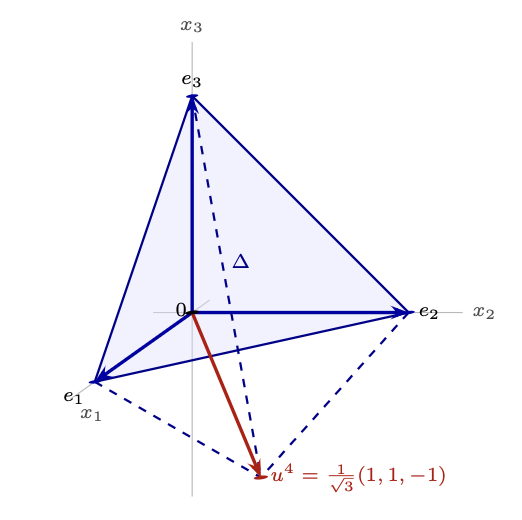}
    \caption{The non-example of \cref{ex:degen}. The first three
normalized extreme rays $e_1,e_2,e_3$ lie on the simplex plane
$\Delta=\{x_1+x_2+x_3=1\}$ (shaded), but $u^4$ has component sum
$1/\sqrt 3$ and therefore lies on the strictly parallel plane
$\{x_1+x_2+x_3=1/\sqrt 3\}$, off~$\Delta$. The four points are
then affinely independent in~$\RR^3$, their convex hull is a
non-degenerate tetrahedron (edges to $u^4$ dashed), and
$\aff(E_\KK)=\RR^3\ni 0$. The projection of the origin onto
$\closu\aff(E_\KK)$ is the origin itself, so~\cref{eq:dnonpoly}
returns $d=0$ and the inscribed-ball conclusion of~\cref{thm:nonpoly}
is vacuous, even though $\KK$ is pointed.}
\label{fig:degen}
\end{figure}   
\begin{remark}[Hilbert-space version]\label{rem:hilbert}
The proof of~\cref{thm:nonpoly} uses only orthogonal projection
onto a closed affine subspace, the Cauchy--Schwarz inequality, and
the representation of~$\KK$ as the closed conic hull of~$E_\KK$.
The first two hold in any Hilbert space~$\Hcal$, and the third
follows from the Choquet integral
representation~\cite{Phelps:2001} whenever~$\KK\subset\Hcal$ has a
weakly compact base. The same argument therefore yields
$\scal{d+v}{z}\le 0$ for every $z\in \KK$ in that setting.
\end{remark}

\section{The maximum interior step}\label{sec:step}

The closing remark of~\cite{BehlingEtAl_CDC} suggests using
circumcentric directions inside algorithms for constrained convex
optimization, but stops short of specifying step lengths.
\Cref{cor:dirdepth} fills that gap. Specialized to the active-cone
setting of~\cite[Cor.~2.7--2.8]{BehlingEtAl_CDC}, it yields a
closed-form expression for the largest step that keeps an iterate
strictly inside $\Omega$ when one moves from~$d$ along an arbitrary
direction.

\begin{proposition}[sharp interior step]\label{prop:step}
Let $\Omega=\{x\in\RR^n:g(x)\le 0\}$, where
$g\colon\RR^n\to\RR^m$ has convex differentiable components and
satisfies the Slater condition. Fix $\bar x\in\Omega$ with
$J(\bar x):=\{j:g_j(\bar x)=0\}\ne\emptyset$, take
$B_\KK=\{u^j\}_{j\in J(\bar x)}$ to be the normalized conic base
of $\KK=\cone\{\nabla g_j(\bar x):j\in J(\bar x)\}$, and set
$d=-\circum(B_\KK)$. Then for every
$w\in\RR^n\setminus\{0\}$ the quantity
\begin{align}\label{eq:sigmastar}
\sigma^{\star}(w):=
\begin{cases}
+\infty, & \max_{j\in J(\bar x)}\scal{w}{u^j}\le 0,\\[1mm]
\dfrac{\norm{d}^2}{\max_{j\in J(\bar x)}\scal{w}{u^j}}, & \text{otherwise,}
\end{cases}
\end{align}
is the supremum of $\sigma\ge 0$ for which $d+\sigma w$ remains an
interior feasible direction for~$\Omega$ at~$\bar x$.
\end{proposition}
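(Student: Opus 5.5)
The plan is to reduce the statement to \cref{cor:dirdepth}, after fixing precisely what ``interior feasible direction for~$\Omega$ at~$\bar x$'' means in the active-cone setting of~\cite[Cor.~2.7--2.8]{BehlingEtAl_CDC}. Under Slater, that corollary identifies $\Kpolar$ with $\Tcone_\Omega(\bar x)=\{v:\scal{\nabla g_j(\bar x)}{v}\le 0,\ j\in J(\bar x)\}$. The interior feasible directions are then the vectors $v$ with $\scal{\nabla g_j(\bar x)}{v}<0$ for all active $j$. For such $v$, convexity and differentiability of each $g_j$ give $g_j(\bar x+tv)<0$ for small $t>0$ on active indices, while continuity keeps the inactive constraints strictly negative. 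Since each $u^j$ is a positive multiple of $\nabla g_j(\bar x)$, this set is exactly $\inte\Kpolar=\{w:\scal{w}{u^j}<0\ \forall j\}$. One caveat: if two active gradients are parallel, they contribute a single element of $B_\KK$, but that changes neither the polar nor the maximum. I also note that $\nabla g_j(\bar x)\ne 0$ for active $j$ by Slater, so the normalization is legitimate.

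With this identification, I would invoke \eqref{eq:exact-int} of \cref{prop:exact}. It says $d+\sigma w\in\inte\Kpolar$ if and only if $\sigma\max_j\scal{w}{u^j}<\norm{d}^2$. I then split into cases.

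\emph{Case $m:=\max_j\scal{w}{u^j}\le 0$.} The condition holds for every $\sigma\ge 0$, because $\norm{d}^2>0$. This requires $d\ne 0$, and I would justify it from Slater: the gradients admit a common strictly negative direction, so $\KK$ is pointed and contains no line. Hence $0\notin\conv(B_\KK)$. I expect this to be the main obstacle, since pointedness alone does not exclude $0\in\aff(B_\KK)$ (cf.\ \cref{ex:degen}). I would therefore lean on the standing conventions of~\cite{BehlingEtAl_CDC}, which guarantee $\circum(B_\KK)\ne 0$ in the active-cone setting (their Prop.~3.2), and flag this dependence explicitly. In this case the supremum is $+\infty$.

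\emph{Case $m>0$.} The admissible set is $[0,\norm{d}^2/m)$, a half-open interval whose supremum is $\norm{d}^2/m$. This matches \eqref{eq:sigmastar}. The endpoint itself is not admissible, but the statement concerns only the supremum, and that agrees with $\rho_\KK(w)$ from \cref{cor:dirdepth}: the closed and open versions share the same supremum.
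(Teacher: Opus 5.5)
Your route is the paper's route. You identify the interior feasible directions at $\bar x$ with $\inte\Kpolar$ through Slater and \cite[Cor.~2.8]{BehlingEtAl_CDC}, then read off the threshold from \cref{prop:exact} and \cref{cor:dirdepth}. Using the strict form \eqref{eq:exact-int} with an explicit case split is more careful than the paper's one-line appeal to \cref{cor:dirdepth}, and you correctly isolated the crux, $d\neq 0$. Your proposed resolution of that crux, however, does not work. The hypotheses of the proposition (Slater plus convex differentiable $g$) do not force $\circum(B_\KK)\neq 0$, so no result of \cite{BehlingEtAl_CDC} can supply it here, and without it the statement is false. To see this, realize the cone of \cref{ex:degen} as an active cone: $g_j(x)=\scal{a^j}{x}$ with $a^1=e_1$, $a^2=e_2$, $a^3=e_3$, $a^4=(1,1,-1)/\sqrt3$, and $\bar x=0$. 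Slater holds at $\hat x=-(2,2,1)$, all four constraints are active, and each $a^j$ spans an extreme ray. The four unit vectors are affinely independent in $\RR^3$ and the origin is equidistant from them, so their circumcenter is $0$ and $d=0$. For $w=-e_1$ one has $\max_j\scal{w}{a^j}=0$, so \eqref{eq:sigmastar} returns $+\infty$. Yet no $\sigma\ge 0$ makes $\sigma w$ an interior feasible direction, since $g_2(t\sigma w)=0$ for every $t$; the admissible set is empty.

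Thus $d\neq 0$, equivalently $0\notin\aff(B_\KK)$, must be added as a hypothesis. It holds automatically when the normalized active gradients are linearly independent, as in the setting of \cref{prop:gram}. With that hypothesis your argument is complete. The paper's proof relies on the same assumption tacitly: its claim that $d+\sigma w\in\inte\Kpolar$ at $\sigma=0$ is precisely $d\in\inte\Kpolar$.

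A smaller point concerns your index caveat. Parallel active gradients are harmless, but an active gradient that does not span an extreme ray of $\KK$ must also be excluded from the maximum in \eqref{eq:sigmastar}. Take active gradients $e_1$, $e_2$, $(e_1+e_2)/\sqrt2$ in $\RR^2$ and $w=(e_1+e_2)/\sqrt2$. The maximum over all three is $1$, so the formula would return $1/2$ instead of the correct $\sigma^\star(w)=1/\sqrt2$ computed from $B_\KK=\{e_1,e_2\}$.
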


\begin{proof}
Apply~\cref{cor:dirdepth} to~$w$: $d+\sigma w\in\inte\Kpolar$ for
every $\sigma\in[0,\sigma^\star(w))$, with the binding constraint
at $\sigma=\sigma^\star(w)$. Slater forces
$\Kpolar=\Tcone_\Omega(\bar x)$ \cite[Cor.~2.8]{BehlingEtAl_CDC},
so $\inte\Kpolar$ is the cone of interior feasible directions
at~$\bar x$.
\end{proof}

A natural application is a feasibility-corrected projected
gradient update. Given $\bar x\in\Omega$ and a descent
direction~$w$, for instance $w=-\nabla f(\bar x)$ for an
unconstrained objective~$f$, the step
\begin{align}\label{eq:fcpg}
\bar x_+:=\bar x+t\bigl(d+\sigma w\bigr),\qquad
\sigma\in\bigl(0,\sigma^\star(w)\bigr),\ t>0\text{ small,}
\end{align}
is feasible by construction. Two choices of $\sigma$ are common.
The choice $\sigma=\tfrac12\sigma^\star(w)$ preserves a uniform
interior margin and is suited to the case where
$\sigma^\star(w)$ is already computed. The conservative
$\sigma=\norm{d}^2/\norm{w}$ avoids the inner maximization over
$J(\bar x)$ and remains valid by Cauchy--Schwarz; it is
preferable when $w$ is close to the axis $-d/\norm{d}$, an angle
controlled by~\cref{cor:angular}.

\begin{remark}[scale invariance]\label{rem:nonormal}
The formula~\cref{eq:sigmastar} is positively homogeneous of
degree~$-1$ in~$w$: $\sigma^\star(\alpha w)=\sigma^\star(w)/\alpha$
for $\alpha>0$. The displacement $\sigma^\star(w)w$ depends
therefore only on the direction of~$w$, not on its norm.
\end{remark}

\section{Piecewise smooth constraints}\label{sec:piecewise}

The smoothness assumption on $g$
in~\cite[Cor.~2.8]{BehlingEtAl_CDC} can be relaxed at no cost when
each $g_j$ is the maximum of finitely many smooth convex
functions. This setting includes cutting-plane formulations,
piecewise quadratic penalties, and standard descriptions of
$L_\infty$-balls $\norm{Ax-b}_\infty\le\tau$. The same analysis
applies to the cone generated by the gradients of the active
pieces.

\begin{corollary}[piecewise smooth admissible step]\label{cor:piecewise}
Suppose $$g_j(x)=\max_{i\in I_j}g_{j,i}(x)$$ with each~$g_{j,i}$
convex and continuously differentiable on~$\RR^n$ and each~$I_j$
finite. Fix $\bar x\in\Omega=\{g\le 0\}$, set
$J(\bar x)=\{j:g_j(\bar x)=0\}$ and
$I_j(\bar x)=\{i\in I_j:g_{j,i}(\bar x)=0\}$ for $j\in J(\bar x)$,
and define the active-pieces cone
\begin{align}
\KK:=\cone\bigl\{\nabla g_{j,i}(\bar x):j\in J(\bar x),\ i\in I_j(\bar x)\bigr\}.
\end{align}
Let $B_\KK$ denote the normalized conic base of~$\KK$ and
$d=-\circum(B_\KK)$. Suppose the inner Slater condition
\begin{align}
\bigl\{x\in\RR^n:\scal{\nabla g_{j,i}(\bar x)}{x-\bar x}<0\text{ for all }j\in J(\bar x),\ i\in I_j(\bar x)\bigr\}\ne\emptyset
\end{align}
holds. Then $d+v$ is an interior feasible direction for~$\Omega$
at~$\bar x$ for every $\norm{v}<\norm{d}^2$.
\end{corollary}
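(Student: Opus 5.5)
The plan is to reduce the corollary to the strict version of the inscribed-ball estimate for the finitely generated cone $\KK$, and then to identify $\inte\Kpolar$ with the cone of interior feasible directions of $\Omega$ at $\bar x$. This mirrors the proof of \cref{prop:step}, with \cite[Cor.~2.8]{BehlingEtAl_CDC} replaced by a piecewise argument.

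\textbf{Step 1 (strict inscribed ball).} Let $B_\KK=\{u^1,\ldots,u^p\}$ be the unit vectors along the extreme rays of $\KK$. The inner Slater condition gives a vector $y=x-\bar x$ with $\scal{\nabla g_{j,i}(\bar x)}{y}<0$ for all active pieces. This makes $\KK$ pointed, and the identity \cref{eq:keyident} $\scal{d}{u^k}=-\norm{d}^2$ holds as in \Cref{sec:basic}. For $\norm{v}<\norm{d}^2$, Cauchy--Schwarz gives $\scal{d+v}{u^k}\le-\norm{d}^2+\norm{v}<0$ for every $k$. Hence $d+v\in\inte\Kpolar$ by \cref{eq:exact-int}. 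This also forces $d\ne0$ whenever some $v$ exists.

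\textbf{Step 2 (interior directions of $\Omega$).} Fix $w\in\inte\Kpolar$. Then $\scal{w}{\nabla g_{j,i}(\bar x)}<0$ for every active pair $(j,i)$, since each such gradient is a nonnegative, nonzero combination of the $u^k$. We must show that $\bar x+tw'\in\Omega$ for all small $t>0$ and all $w'$ near $w$.

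\emph{Inactive constraints and pieces.} If $j\notin J(\bar x)$, then $g_j(\bar x)<0$, and $g_j$ stays negative near $\bar x$ by continuity. Within an active $j$, any inactive piece $i\notin I_j(\bar x)$ has $g_{j,i}(\bar x)<0$ and likewise stays negative nearby.

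\emph{Active pieces.} For an active piece, a first-order Taylor expansion with $C^1$ continuity of $\nabla g_{j,i}$ gives
\[
g_{j,i}(\bar x+tw')=t\scal{\nabla g_{j,i}(\bar x)}{w'}+o(t),
\]
uniformly for $w'$ in a small neighbourhood of $w$. The strict inequality $\scal{\nabla g_{j,i}(\bar x)}{w}<0$ persists on that neighbourhood, so each active piece is negative for small $t>0$.

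Since $g_j$ is the maximum of its finitely many pieces, $g_j(\bar x+tw')<0$ for all small $t$, and $w$ is an interior feasible direction. Applying this to $w=d+v$ from Step 1 concludes the proof.

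\textbf{Main obstacle.} The delicate point is the meaning of ``interior feasible direction'' and the uniformity in Step 2. If the paper intends $\inte\Tcone_\Omega(\bar x)$, I would instead prove $\Kpolar=\Tcone_\Omega(\bar x)$ using the inner Slater condition (i.e. MFCQ on the active pieces). The inclusion $\Tcone_\Omega(\bar x)\subseteq\Kpolar$ follows from convexity of each active piece: for a piece with $g_{j,i}(\bar x)=0$, we have $g_{j,i}(\bar x+y)\ge\scal{\nabla g_{j,i}(\bar x)}{y}$, so every feasible direction has nonpositive inner product with each generator. The reverse inclusion follows by the Step 2 argument applied to $\inte\Kpolar$, which is nonempty by the inner Slater condition, followed by taking closures. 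With that equality in hand, $\inte\Kpolar=\inte\Tcone_\Omega(\bar x)$, and Step 1 finishes the argument exactly as in \cref{prop:step}.
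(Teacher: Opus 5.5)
Your proof is correct, and its skeleton matches the paper's. Your Step~1 ($d+v\in\inte\Kpolar$ via \cref{eq:keyident} and Cauchy--Schwarz) is the paper's first sentence. The paper then asserts that the inner Slater condition plus convexity of the pieces give $\Kpolar=\Tcone_\Omega(\bar x)$, and reads off the conclusion from $\inte\Kpolar\subset\Tcone_\Omega(\bar x)$.

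You differ in Step~2. The paper states the tangent-cone identity without proof, by analogy with \cite[Cor.~2.8]{BehlingEtAl_CDC}. You instead prove the needed inclusion directly, by a uniform first-order expansion of the active pieces and continuity for the inactive pieces and constraints. This buys three things:
\begin{itemize}
\item The argument is self-contained.
\item It uses only $C^1$ smoothness of the pieces, not their convexity. The inner Slater condition enters only to make $\KK$ pointed with nonzero generators, so that $B_\KK$ generates $\KK$.
\item It gives a stronger conclusion: $\bar x+tw'\in\{g<0\}\subset\inte\Omega$ uniformly for $w'$ near $d+v$ and small $t>0$. This is what \Cref{sec:linf} actually uses when it says the iterate stays in the interior of $\Omega$.
\end{itemize}

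Since every $w'$ in the ball around $w$ is a feasible direction, Step~2 already gives $w\in\inte\Tcone_\Omega(\bar x)$ under either reading of ``interior feasible direction.'' Your fallback paragraph is therefore not needed for this corollary. That paragraph is essentially the paper's route, with the equality $\Kpolar=\Tcone_\Omega(\bar x)$ now actually proved. The convexity-based half of that equality, $\Tcone_\Omega(\bar x)\subseteq\Kpolar$, is what sharpness statements such as the supremum claim in \cref{prop:step} require. It is not needed for the inclusion asserted here.
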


\begin{proof}
The cone~$\KK$ is finitely generated, so the analysis of
\Cref{sec:exact,sec:step} applies and gives
$d+v\in\inte\Kpolar$ for $\norm{v}<\norm{d}^2$. The inner Slater
condition together with convexity of each $g_{j,i}$ yields
$\Kpolar=\Tcone_\Omega(\bar x)$, and the conclusion follows from
$\inte\Kpolar\subset\Tcone_\Omega(\bar x)$.
\end{proof}

\begin{remark}[the inner Slater condition is MFCQ]\label{rem:mfcq}
Under convexity of the pieces, the Clarke
subdifferential~\cite{Clarke:1983} of $g_j$ at
$\bar x\in\{g_j=0\}$ is
$\partial g_j(\bar x)=\conv\{\nabla g_{j,i}(\bar x):i\in I_j(\bar x)\}$,
so the inner Slater condition stated in \cref{cor:piecewise} is
the existence of $d\in\RR^n$ with $\scal{w}{d}<0$ for every
$w\in\partial g_j(\bar x)$ and every $j\in J(\bar x)$. This is
exactly the Mangasarian--Fromovitz constraint
qualification~\cite{Mangasarian:1967} at $\bar x$ for the
(nonsmooth) system $\{g_j\le 0\}_{j\in J(\bar x)}$.
\end{remark}

For a polyhedral illustration, take
$\Omega=\{x:\norm{Ax-b}_{\infty}\le\tau\}$ for a matrix
$A\in\RR^{p\times n}$, written componentwise as
$g_j(x)=\max\bigl\{e_j^\top(Ax-b)-\tau,\ -e_j^\top(Ax-b)-\tau\bigr\}$.
The gradients of the two active pieces at any tight constraint
are $\pm A^\top e_j$, and the corresponding active-pieces cone is
generated by the signed rows of $A$. The next section develops
this case in full and treats a parallel SOC instance.

\section{Two concrete problem classes}\label{sec:concrete}

The constructions of \Cref{sec:step,sec:piecewise} prescribe a
feasible direction and a step length given an active-pieces cone,
but stop short of computation. We show in this section that two
standard problem classes, $L_\infty$-ball constrained least
squares (piecewise smooth, polyhedral feasible set) and
second-order cone programming (smooth, non-polyhedral feasible
set), yield explicit closed forms for $d$, $\norm{d}^2$, and
$\sigma^\star$ through the Gram-matrix formula of
\cref{prop:gram}, with no auxiliary optimization. The two cases
together cover both branches of the framework:
\cref{cor:piecewise} for the first, \cref{prop:step} for the
second.

\subsection{$L_\infty$-ball constrained least squares}\label{sec:linf}

Consider the box-constrained problem
\begin{align}\label{eq:linfprob}
\min_{x\in\RR^n}\ \tfrac12\norm{Ax-b}^2\quad\text{subject to}\quad\norm{Cx-d}_\infty\le\tau,
\end{align}
with $A\in\RR^{m\times n}$, $b\in\RR^m$, $C\in\RR^{p\times n}$,
$d\in\RR^p$, and $\tau>0$. Writing $c_j$ for the $j$-th row of
$C$, the feasible set
\begin{align}
\Omega=\bigl\{x\in\RR^n:c_j^\top x-d_j\in[-\tau,\tau]\text{ for }j=1,\ldots,p\bigr\}
\end{align}
is described componentwise by the piecewise affine constraints
\begin{align}\label{eq:linfgj}
g_j(x)=\max\bigl\{c_j^\top x-d_j-\tau,\ -(c_j^\top x-d_j)-\tau\bigr\},
\end{align}
so~\cref{cor:piecewise} applies. At a feasible $\bar x$, define
the {\em signed} active set
\begin{align}\label{eq:linfJpm}
J^{\pm}(\bar x):=\bigl\{(j,\epsilon_j):c_j^\top\bar x-d_j=\epsilon_j\tau,\ \epsilon_j\in\{\pm 1\}\bigr\}.
\end{align}
Each index $j$ contributes at most one signed pair: the
constraints at $+\tau$ and $-\tau$ are simultaneously active only
in the degenerate case $\tau=0$. The active-pieces cone is the
polyhedral cone
\begin{align}\label{eq:linfK}
\KK=\cone\bigl\{\epsilon_j c_j:(j,\epsilon_j)\in J^{\pm}(\bar x)\bigr\}\subset\RR^n,
\end{align}
with normalized base
$u^{(j,\epsilon_j)}:=\epsilon_j c_j/\norm{c_j}$, and the Gram
matrix
\begin{align}\label{eq:linfgram}
M_{(j,\epsilon_j),(k,\epsilon_k)}=\frac{\epsilon_j\epsilon_k\scal{c_j}{c_k}}{\norm{c_j}\norm{c_k}}
\end{align}
delivers, via~\cref{prop:gram}, the closed forms
\begin{align}\label{eq:linfd}
d=-\frac{1}{\one^\top M^{-1}\one}\sum_{(j,\epsilon_j)\in J^{\pm}(\bar x)}(M^{-1}\one)_{(j,\epsilon_j)}\,\frac{\epsilon_j c_j}{\norm{c_j}},\qquad
\norm{d}^2=\frac{1}{\one^\top M^{-1}\one}.
\end{align}

The unconstrained descent direction is
$w=-\nabla f(\bar x)=-A^\top(A\bar x-b)$, and~\cref{cor:dirdepth}
returns the maximum interior step
\begin{align}\label{eq:linfsigma}
\sigma^{\star}(w)=
\begin{cases}
+\infty, & \displaystyle\max_{(j,\epsilon_j)\in J^\pm(\bar x)}\epsilon_j\scal{c_j}{w}/\norm{c_j}\le 0,\\[2mm]
\dfrac{\norm{d}^2}{\displaystyle\max_{(j,\epsilon_j)\in J^\pm(\bar x)}\epsilon_j\scal{c_j}{w}/\norm{c_j}}, & \text{otherwise.}
\end{cases}
\end{align}
The iterate $\bar x_+:=\bar x+t(d+\sigma w)$ remains in the
interior of $\Omega$ for every
$\sigma\in(0,\sigma^{\star}(w))$ and every sufficiently small
$t>0$, by~\cref{cor:piecewise}.

Two regimes bound the geometry. When the active rows
$\{c_j\}_{(j,\cdot)\in J^\pm(\bar x)}$ are mutually
orthonormal, the case $C=I_p$ in particular, where each row is
a coordinate direction, the Gram matrix $M$ is the identity and
\cref{eq:linfd} collapses to
\begin{align}\label{eq:linforthant}
d=-\frac{1}{|J^{\pm}|}\sum_{(j,\epsilon_j)\in J^{\pm}(\bar x)}\epsilon_j c_j,\qquad\norm{d}^2=\frac{1}{|J^{\pm}|},
\end{align}
recovering the orthant geometry of \cref{ex:orthant} up to signs
and giving the $1/|J^{\pm}|$ scaling for vertex-active iterates of
the $L_\infty$ ball. When the active rows approach linear
dependence, the spectral bound
$\norm{d}^2\ge\lambda_{\min}(M)/|J^{\pm}|$ of~\cref{prop:gram}
forces $\norm{d}^2\to 0$ at the rate of $\lambda_{\min}(M)$, and
the inscribed-ball estimate degrades accordingly. The aperture
identity $\norm{d}=\cos\theta(B_\KK)$ of \cref{prop:aperture}
reads, in the orthonormal regime,
$\theta(B_\KK)=\arccos(1/\sqrt{|J^{\pm}|})$: the half-aperture
grows as more constraints saturate, and the directional-depth
bound~\cref{eq:rhobound} of \cref{cor:angular} yields a uniform
estimate depending only on $|J^{\pm}|$ and on the angle of the
descent direction $w$ to the axis $-d/\norm{d}$.

\subsection{Second-order cone programming}\label{sec:socp}

The companion smooth instance is the second-order cone program
\begin{align}\label{eq:socp}
\min_{x\in\RR^n}\ \tfrac12 x^\top Q x+q^\top x\quad\text{subject to}\quad\norm{A_jx-b_j}\le c_j^\top x+\delta_j,\ j=1,\ldots,m,
\end{align}
with $Q\succeq 0$, $A_j\in\RR^{n_j\times n}$, $b_j\in\RR^{n_j}$,
$c_j\in\RR^n$, $\delta_j\in\RR$, satisfying Slater's condition.
The non-polyhedral feasible set
\begin{align}
\Omega=\bigcap_{j=1}^{m}\bigl\{x\in\RR^n:\norm{A_jx-b_j}\le c_j^\top x+\delta_j\bigr\}
\end{align}
is the intersection of $m$ rotated second-order cones. Each
constraint $$g_j(x)=\norm{A_jx-b_j}-c_j^\top x-\delta_j$$ is convex
and continuously differentiable on $\{x:A_jx\ne b_j\}$, with
gradient
\begin{align}\label{eq:socgrad}
\nabla g_j(\bar x)=\frac{A_j^\top(A_j\bar x-b_j)}{\norm{A_j\bar x-b_j}}-c_j.
\end{align}
\Cref{prop:step} therefore applies at any feasible $\bar x$ with
$A_j\bar x\ne b_j$ for every $j\in J(\bar x)$, the {\em apex-free}
condition that excludes the lone non-smooth point of each SOC.

The geometric content is that, although $\Omega$ itself is
non-polyhedral, the active cone
$\KK=\cone\{\nabla g_j(\bar x):j\in J(\bar x)\}$ is finitely
generated (by $|J(\bar x)|$ vectors) at any apex-free feasible
iterate, and the polyhedral closed forms of
\Cref{sec:exact,sec:gram,sec:aperture} apply directly. Writing
$g^j:=\nabla g_j(\bar x)$ from~\cref{eq:socgrad} and
$u^j:=g^j/\norm{g^j}$, the Gram matrix
$M_{ij}=\scal{u^i}{u^j}$ has size
$|J(\bar x)|\times|J(\bar x)|$ and yields, via~\cref{prop:gram},
\begin{align}\label{eq:socd}
d=-\frac{1}{\one^\top M^{-1}\one}\sum_{j\in J(\bar x)}(M^{-1}\one)_j u^j,\qquad
\norm{d}^2=\frac{1}{\one^\top M^{-1}\one}.
\end{align}
For a descent direction $w=-(Q\bar x+q)$ of the quadratic
objective, \cref{prop:step} prescribes
\begin{align}\label{eq:socsigma}
\sigma^{\star}(w)=\frac{\norm{d}^2}{\max_{j\in J(\bar x)}\scal{w}{u^j}}
\end{align}
when the maximum is positive, and $\sigma^{\star}(w)=+\infty$
otherwise. The closed form~\cref{eq:socsigma} is the SOCP analogue
of~\cref{eq:linfsigma}, with the signed coordinate rows
$\epsilon_j c_j/\norm{c_j}$ of the $L_\infty$ case replaced by the
SOC active gradients~\cref{eq:socgrad}.

The single-constraint case sets the dimension-free benchmark.
With $m=1$, $J(\bar x)=\{1\}$, the cone $\KK$ is a single ray,
$d=-u^1$, $\norm{d}^2=1$, and~\cref{eq:socsigma} reduces to
$\sigma^{\star}(w)=1/\scal{w}{u^1}$ when this is positive. With
two active SOC constraints whose normalized gradients have inner
product $\scal{u^1}{u^2}=\rho\in(-1,1)$, the Gram matrix
$M=\bigl(\begin{smallmatrix}1&\rho\\\rho&1\end{smallmatrix}\bigr)$
gives $\one^\top M^{-1}\one=2/(1+\rho)$ and
\begin{align}\label{eq:soctwo}
\norm{d}^2=\frac{1+\rho}{2},
\end{align}
which interpolates between the orthogonal case $\rho=0$
($\norm{d}^2=1/2$, recovering~\cref{ex:orthant}), the co-aligned
limit $\rho\uparrow 1$ ($\norm{d}^2\uparrow 1$, collapsing to a
single ray), and the antipodal limit $\rho\downarrow-1$
($\norm{d}^2\downarrow 0$, the loss-of-pointedness regime
of~\cref{rem:cond}). The step length~\cref{eq:socsigma} inherits
the same continuous dependence on $\rho$ through $\norm{d}^2$,
giving the explicit two-constraint formula
\begin{align}\label{eq:soctwosigma}
\sigma^{\star}(w)=\frac{(1+\rho)/2}{\max\{\scal{w}{u^1},\scal{w}{u^2}\}},
\end{align}
which remains bounded as long as the two active gradients are not
exactly antipodal.

The role of \Cref{sec:nonpoly} for SOCP is qualitative: the
active-cone view collapses each SOC constraint to a single
normalized gradient, but the {\em underlying} cone
$\mathcal{L}^{n_j}$ contributes the global aperture estimates of
\cref{ex:soc} and the inscribed-ball margin $1/2$ on its polar,
which the active-cone $\norm{d}^2$ cannot violate uniformly along
an iterate sequence. The $1/2$ benchmark of \cref{ex:soc} thus
serves as a geometric reference for the SOCP step lengths
returned by~\cref{eq:socsigma}, and the Jordan-rank value
$1/r=1/2$ from \cref{prop:symmetric} confirms that this benchmark
is intrinsic to the second-order cone, not an artifact of the
active-cone specialization.

\section{A Bregman extension}\label{sec:bregman}

Identity~\cref{eq:keyident} is the optimality condition for an
orthogonal projection. Replacing the Euclidean projection with a
Bregman projection induced by a Legendre function~$h$ produces a
direction $d_h$ for which the same identity holds in dual
coordinates, and a Bregman inscribed-ball estimate follows by the
Cauchy--Schwarz argument of~\cref{thm:nonpoly}. The Euclidean case
$h(x)=\tfrac12\norm{x}^2$ is recovered. We follow the standard Bregman conventions;
see~\cite{BauschkeBorwein:1996,Combettes:1997,BC:book}. Throughout
this section, $h\colon\RR^n\to\RR\cup\{+\infty\}$ is a {Legendre}
function: proper, lower semicontinuous, convex, essentially smooth
and essentially strictly convex, with $\inte\dom h\ne\emptyset$.
The {Bregman divergence} is
\begin{align}\label{eq:bregdiv}
D_h(x,y):=h(x)-h(y)-\scal{\nabla h(y)}{x-y},\qquad x\in\dom h,\ y\in\inte\dom h,
\end{align}
and is non-negative, jointly continuous, and zero only on the
diagonal. The {Bregman projection} of $y\in\inte\dom h$ onto a
closed convex set $C\subset\dom h$ that meets $\inte\dom h$ is
the unique minimizer
\begin{align}
P^h_C(y):=\argmin_{x\in C\cap\inte\dom h}D_h(x,y),
\end{align}
when the minimum exists and is attained in the interior. The
Euclidean case $h(x)=\tfrac12\norm{x}^2$ has
$D_h(x,y)=\tfrac12\norm{x-y}^2$ and $P^h_C=\proj_C$.

We add the standing hypothesis that $0$ is the minimizer of $h$:
\begin{align}\label{eq:0min}
0\in\inte\dom h,\qquad \nabla h(0)=0.
\end{align}
This normalization makes $0$ the Bregman ``origin''. It holds for
$h(x)=\tfrac12\norm{x}^2$ and, more generally, for
$h(x)=\tfrac1p\norm{x}^p$ with $p\ge 2$ (with the convention
$\nabla h(0)=0$).

\begin{lemma}[Bregman key identity]\label{lem:bregman-key}
Let $\KK=\cone(B_\KK)\subset\RR^n$ be a polyhedral cone with
normalized conic base $B_\KK=\{u^1,\ldots,u^p\}$, and assume
$\aff(B_\KK)\subset\inte\dom h$ and $0\notin\aff(B_\KK)$. Set
\begin{align}\label{eq:ch}
c_h:=\argmin_{x\in\aff(B_\KK)}D_h(x,0)
\end{align}
(the unique minimizer in $\inte\dom h$ when it exists), and
$\kappa_h:=\scal{\nabla h(c_h)}{c_h}$. Then $c_h\ne 0$, $\kappa_h>0$,
and
\begin{align}\label{eq:breg-key}
\scal{\nabla h(c_h)}{u^i}=\kappa_h,\qquad i=1,\ldots,p.
\end{align}
\end{lemma}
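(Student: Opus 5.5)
The plan is to read \cref{eq:breg-key} off the first-order optimality condition for \cref{eq:ch}, mirroring how \cref{eq:keyident} comes from orthogonality of the Euclidean projection. Since $D_h(x,0)=h(x)-h(0)-\scal{\nabla h(0)}{x}=h(x)-h(0)$ by \cref{eq:0min}, the point $c_h$ minimizes $h$ over the affine set $A:=\aff(B_\KK)$. Write $L:=\operatorname{span}\{u^i-u^1:i=2,\ldots,p\}$ for its linear part. Because $A\subset\inte\dom h$ and $h$ is differentiable there, the minimizer must satisfy
\begin{align*}
\scal{\nabla h(c_h)}{\ell}=0\qquad\text{for every }\ell\in L,
\end{align*}
since $t\mapsto h(c_h+t\ell)$ is differentiable near $t=0$ with a minimum at $0$.

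Taking $\ell=u^i-c_h\in L$ (both points lie in $A$) gives $\scal{\nabla h(c_h)}{u^i}=\scal{\nabla h(c_h)}{c_h}=\kappa_h$ for every $i$. This is \cref{eq:breg-key}.

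Next, $c_h\ne 0$ follows at once from $c_h\in A$ and $0\notin A$. For $\kappa_h>0$, use strict monotonicity of $\nabla h$ on $\inte\dom h$, the fact recorded in \Cref{sec:prelim}. Both $c_h$ and $0$ lie in $\inte\dom h$, and $c_h\ne0$, so
\begin{align*}
\kappa_h=\scal{\nabla h(c_h)-\nabla h(0)}{c_h-0}>0.
\end{align*}

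The statement already presumes existence and uniqueness of $c_h$ ("when it exists"). If needed, uniqueness follows from strict convexity of $h$ on the convex set $A\subset\inte\dom h$, which holds for a Legendre function. The only delicate point I expect is justifying the stationarity condition. It needs $A$ to sit inside the open set $\inte\dom h$, so that the minimizer is an interior point relative to directions in $L$ and no normal-cone term from $\partial\dom h$ appears. That is exactly the hypothesis $\aff(B_\KK)\subset\inte\dom h$, so I would invoke it explicitly when differentiating along lines in $A$.
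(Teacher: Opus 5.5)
Your proposal is correct and follows the paper's proof essentially step for step. The first-order condition $\scal{\nabla h(c_h)}{x-c_h}=0$ on $\aff(B_\KK)$, evaluated at $x=u^i$, gives the identity; $c_h\ne 0$ comes from $0\notin\aff(B_\KK)$; and $\kappa_h>0$ comes from strict monotonicity of $\nabla h$ together with $\nabla h(0)=0$. Your explicit justification of stationarity, by differentiating along lines inside the open set $\inte\dom h$, is a detail the paper leaves implicit.
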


\begin{proof}
Since $\nabla h(0)=0$, we have
$\nabla_x D_h(x,0)=\nabla h(x)-\nabla h(0)=\nabla h(x)$. The
first-order optimality condition for minimizing $D_h(\cdot,0)$
over the affine set $\aff(B_\KK)$ at the interior minimizer $c_h$
is $\nabla h(c_h)\perp\aff(B_\KK)-c_h$, equivalently
$\scal{\nabla h(c_h)}{x-c_h}=0$ for every $x\in\aff(B_\KK)$.
Setting $x=u^i$ gives~\cref{eq:breg-key} with
$\kappa_h=\scal{\nabla h(c_h)}{c_h}$. Strict monotonicity of
$\nabla h$ on $\inte\dom h$ (a consequence of essential strict
convexity~\cite{Rockafellar:1970,BC:book}) and $\nabla h(0)=0$
give $\scal{\nabla h(c_h)-\nabla h(0)}{c_h-0}>0$ whenever
$c_h\ne 0$, that is, $\kappa_h>0$. Finally $c_h\ne 0$ follows from
$0\notin\aff(B_\KK)$.
\end{proof}

Identity~\cref{eq:breg-key} is the Bregman counterpart
of~\cref{eq:keyident}: the dual point $\nabla h(c_h)$ has the same
inner product $\kappa_h$ with every generator of~$B_\KK$.
Cauchy--Schwarz in the primal yields the inscribed-ball estimate.

\begin{theorem}[Bregman inscribed-ball estimate]\label{thm:bregman-ball}
Under the hypotheses of \cref{lem:bregman-key}, set
$d_h:=-\nabla h(c_h)$. Then $\scal{d_h}{u^i}=-\kappa_h$ for each
$i$, and for every $v\in\RR^n$ with $\norm{v}\le\kappa_h$,
\begin{align}\label{eq:breg-ball}
d_h+v\in\Kpolar.
\end{align}
\end{theorem}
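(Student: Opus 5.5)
The proof mirrors \cref{thm:nonpoly} with the Euclidean identity~\cref{eq:keyident} replaced by the dual identity of \cref{lem:bregman-key}. First, since $d_h=-\nabla h(c_h)$, identity~\cref{eq:breg-key} gives directly $\scal{d_h}{u^i}=-\scal{\nabla h(c_h)}{u^i}=-\kappa_h$ for every $i=1,\ldots,p$. This is the first claim.

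For the inscribed-ball estimate, I fix $v$ with $\norm{v}\le\kappa_h$ and a generator $u^i\in B_\KK$. Cauchy--Schwarz together with $\norm{u^i}=1$ gives
\begin{align*}
\scal{d_h+v}{u^i}=-\kappa_h+\scal{v}{u^i}\le-\kappa_h+\norm{v}\le 0 .
\end{align*}
Since $\KK=\cone(B_\KK)$, every $z\in\KK$ can be written as $z=\sum_i t_iu^i$ with $t_i\ge 0$. Linearity then yields $\scal{d_h+v}{z}=\sum_i t_i\scal{d_h+v}{u^i}\le 0$, so $d_h+v\in\Kpolar$. No closure or limiting argument is needed in the polyhedral setting, unlike in \cref{thm:nonpoly}.

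The only point requiring care is that the hypotheses of \cref{lem:bregman-key} really deliver a well-defined $c_h$ in $\inte\dom h$ and $\kappa_h>0$. These are part of the lemma's conclusion, so I will simply invoke it, noting that $\kappa_h>0$ makes the ball nondegenerate. I expect no genuine obstacle. The main thing to emphasize is that the radius is measured in the primal Euclidean norm, while the constant $\kappa_h$ comes from the dual point $\nabla h(c_h)$; the argument is the same one-line Cauchy--Schwarz pairing as before. As a sanity check, in the Euclidean case $h=\tfrac12\norm{\cdot}^2$ one has $d_h=-c_h=d$ and $\kappa_h=\norm{d}^2$, which recovers~\cref{eq:behball}.
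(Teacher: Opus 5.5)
Your proposal is correct and follows the paper's proof: the first claim is read off from \cref{eq:breg-key}, and the inclusion comes from the same Cauchy--Schwarz bound $\scal{d_h+v}{u^i}\le-\kappa_h+\norm{v}\le 0$ on each unit generator. The only difference is that you write out the passage from the generators to all of $\KK=\cone(B_\KK)$ through nonnegative combinations, which the paper leaves implicit in the description $\Kpolar=\{w:\scal{w}{u^i}\le 0,\ i=1,\ldots,p\}$.
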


\begin{proof}
The first claim is just
$\scal{d_h}{u^i}=-\scal{\nabla h(c_h)}{u^i}=-\kappa_h$
from~\cref{eq:breg-key}. For~\cref{eq:breg-ball}, given
$\norm{v}\le\kappa_h$ and $u^i$ unit,
\begin{align}
\scal{d_h+v}{u^i}=-\kappa_h+\scal{v}{u^i}\le-\kappa_h+\norm{v}\norm{u^i}=-\kappa_h+\norm{v}\le 0,
\end{align}
so $\scal{d_h+v}{u^i}\le 0$ for every $i$, which is precisely
$d_h+v\in\Kpolar$.
\end{proof}

The standard Euclidean case $h(x)=\tfrac12\norm{x}^2$ recovers
\cref{thm:nonpoly} for polyhedral~$\KK$: $\nabla h(x)=x$ gives
$c_h=\proj_{\aff(B_\KK)}(0)=-d$, so $d_h=-\nabla h(c_h)=-c_h=d$,
and $\kappa_h=\scal{c_h}{c_h}=\norm{d}^2$. \Cref{eq:breg-ball}
becomes $\norm{v}\le\norm{d}^2\Rightarrow d+v\in\Kpolar$, which
is~\cite[Thm.~2.6]{BehlingEtAl_CDC}. A simple non-Euclidean instance is the family
$h_p(x)=\tfrac1p\norm{x}^p$ with $p\ge 2$, for which
$\nabla h_p(x)=\norm{x}^{p-2}x$, $\nabla h_p(0)=0$, and
$\dom h_p=\RR^n$. Minimizing $D_{h_p}(x,0)=h_p(x)$ over
$\aff(B_\KK)$ is equivalent to minimizing $\norm{x}$, so
$c_{h_p}=-d$ (the Euclidean projection target), giving
$\nabla h_p(c_{h_p})=\norm{d}^{p-2}(-d)$,
$d_{h_p}=\norm{d}^{p-2}d$, and
$\kappa_{h_p}=\scal{\nabla h_p(c_{h_p})}{c_{h_p}}=\norm{d}^p$.
\Cref{thm:bregman-ball} then states
\begin{align}\label{eq:hp-ball}
\norm{v}\le \norm{d}^p\implies \norm{d}^{p-2}d+v\in\Kpolar,
\end{align}
which for $p=2$ reduces to the Euclidean estimate and for $p>2$
gives a Bregman direction $d_{h_p}$ at distance $\norm{d}^{p-1}$
from the origin, together with an inscribed ball of radius
$\norm{d}^p$.

The $h_p$ family is essentially a re-scaling of the Euclidean
construction: the Bregman direction is parallel to $d$ and the
only new content is the value of the radius. A genuinely different
example, in which $d_h$ is no longer a scalar multiple of $d$, is
the Mahalanobis quadratic.

\begin{example}[Mahalanobis-Bregman extension]\label{ex:maha}
Let $A\in\Smat^n_{++}$ be a positive-definite matrix and set
$h(x):=\tfrac12 x^\top A x$. Then $\nabla h(x)=Ax$, $\nabla h(0)=0$,
and the Bregman divergence is the squared Mahalanobis distance
$D_h(x,y)=\tfrac12(x-y)^\top A(x-y)$. The Bregman projection of $0$
onto $\aff(B_\KK)$ is the unique $c_h\in\aff(B_\KK)$ with
$Ac_h\perp\aff(B_\KK)-c_h$, that is, the $A$-orthogonal projection
of the origin. Then
\begin{align}\label{eq:maha-d}
d_h=-Ac_h,\qquad \kappa_h=c_h^\top A c_h.
\end{align}
For $\KK=\RR^2_+$ and $A=\mathrm{diag}(a_1,a_2)$, Lagrange
optimality on $\aff(B_\KK)=\{x_1+x_2=1\}$ gives
$c_h=\bigl(a_2/(a_1+a_2),\,a_1/(a_1+a_2)\bigr)$, hence
\begin{align}
d_h=-\frac{a_1a_2}{a_1+a_2}(1,1),\qquad
\kappa_h=\frac{a_1a_2}{a_1+a_2},
\end{align}
which reduces to $d_h=d=-(1/2)(1,1)$ and $\kappa_h=1/2$ when
$a_1=a_2$. For $a_1=2,a_2=1$ the Bregman direction
$d_h=-(2/3)(1,1)$ has Euclidean norm $\tfrac{2\sqrt 2}{3}$, larger
than $\norm{d}=\tfrac{\sqrt 2}{2}$, and the inscribed-ball radius
$\kappa_h=2/3$ is larger than the Euclidean radius
$\norm{d}^2=1/2$. The Mahalanobis weighting tilts the construction
toward the lighter coordinate and trades a longer direction for a
wider feasible neighborhood. The bound $\norm{v}\le\kappa_h$ in
\cref{thm:bregman-ball} is still expressed in the {\em Euclidean}
norm, since the Cauchy--Schwarz step that
proves~\cref{eq:breg-ball} pairs the dual point $\nabla h(c_h)$
against $u^i$ in the standard inner product.
\end{example}

\begin{remark}[non-polyhedral version]\label{rem:bregman-nonpoly}
The proof of \cref{lem:bregman-key} uses only first-order
optimality on an affine set and the orthogonality
$\scal{\nabla h(c_h)}{x-c_h}=0$ on $\aff(B_\KK)$. The same
argument applies in the non-polyhedral setting of
\Cref{sec:nonpoly}: if $\KK$ is a closed convex pointed cone with
$\closu\aff(E_\KK)\subset\inte\dom h$ and
$0\notin\closu\aff(E_\KK)$, then
$c_h:=\argmin_{x\in\closu\aff(E_\KK)}D_h(x,0)$ satisfies
$\scal{\nabla h(c_h)}{u}=\kappa_h$ uniformly for $u\in E_\KK$,
and \cref{thm:bregman-ball} extends to give
$d_h+v\in\Kpolar$ for $\norm{v}\le\kappa_h$.
\end{remark}

\subsection{A mirror-descent application}\label{sec:mirror}

The dual identity~\cref{eq:breg-key} fits the standard
mirror-descent template, in which one updates a dual gradient and
returns to the primal through $\nabla h^*$ (see~\cite{BC:book}
for the Hilbert-space framework). Suppose
$f\colon\RR^n\to\RR$ is convex differentiable and we wish to take
a feasibility-corrected descent step at $\bar x\in\Omega=\{g\le 0\}$
in the geometry induced by $h$, with the active-cone setup
of~\cref{prop:step}. The natural Bregman counterpart
of~\cref{eq:fcpg} is
\begin{align}\label{eq:bcfg}
\bar x_+:=\nabla h^*\bigl(\nabla h(\bar x)+\eta\bigl(d_h-\sigma\nabla f(\bar x)\bigr)\bigr),\qquad
\sigma\in\bigl(0,\sigma^{\star}_h(-\nabla f(\bar x))\bigr),\ \eta>0,
\end{align}
where the Bregman step length is the direct analogue
of~\cref{eq:sigmastar},
\begin{align}\label{eq:sigmastar-h}
\sigma^{\star}_h(w):=
\begin{cases}
+\infty, & \max_{j\in J(\bar x)}\scal{w}{u^j}\le 0,\\[1mm]
\dfrac{\kappa_h}{\max_{j\in J(\bar x)}\scal{w}{u^j}}, & \text{otherwise.}
\end{cases}
\end{align}
The same Cauchy--Schwarz argument that delivered \cref{prop:step}
gives that $d_h+\sigma w$ remains in $\inte\Kpolar$ for every
$\sigma\in[0,\sigma^{\star}_h(w))$. The Mahalanobis case
$h(x)=\tfrac12 x^\top A x$ is a particularly transparent
instance: $\nabla h^*=A^{-1}$, the dual update is the linear
shift $Ax\mapsto Ax+\eta(d_h-\sigma\nabla f(\bar x))$, and the
primal update is the constant-time linear solve. The Mahalanobis
weighting $A$ then plays the role of a preconditioner adapted to
the constraint geometry through $c_h$.

\begin{remark}[comparison with Ouyang--Wang Bregman circumcenters]\label{rem:ouyang}
The Bregman circumcenter studied by Ouyang and
Wang~\cite{Ouyang:2021,Ouyang:2023} is, for a finite point set
$X=\{x^1,\ldots,x^m\}\subset\inte\dom h$, a minimizer of
$\max_{i}D_h(c,x^i)$ over $c$ (the precise variational form
depends on which symmetrization of $D_h$ is chosen), regarded as
a Bregman analogue of the Euclidean equidistant point. Our
construction $c_h=\argmin_{x\in\aff(B_\KK)}D_h(x,0)$ instead
computes the Bregman projection of a fixed reference point (the
origin) onto the affine hull of the conic base. The two notions
of ``Bregman circumcenter'' play different roles. Ouyang--Wang
centre an iterate among several auxiliary points, the way the
original Behling--Bello-Cruz--Santos CRM scheme~\cite{BBS:DR}
centres an iterate among reflections; our $c_h$ produces a single
feasibility certificate $d_h\in\Kpolar$ together with a margin
$\kappa_h$, the way the polyhedral $d=-\circum(B_\KK)$ produces
the Euclidean estimate~\cref{eq:behball}. The two coincide in the
Euclidean case $h(x)=\tfrac12\norm{x}^2$ when the data set $X$ is
taken to be the generators $\{u^1,\ldots,u^p\}$, since the
Euclidean circumcenter of a set of unit vectors is the orthogonal
projection of the origin onto their affine hull. Outside this
case the two notions diverge. The Mahalanobis example above gives
$d_h=-(a_1a_2/(a_1+a_2))(1,1)$ from the projection construction.
The Bregman equidistance locus
$\{c:D_h(c,e_1)=D_h(c,e_2)\}$ for the same $h$ reduces to
$a_1(c_1-1)^2+a_2 c_2^2=a_1 c_1^2+a_2(c_2-1)^2$, that is,
$2a_1 c_1-2a_2 c_2=a_1-a_2$, a line in $\RR^2$ that is parallel
to $\aff(B_\KK)=\{c_1+c_2=1\}$ only when $a_1=a_2$. An
Ouyang--Wang Bregman circumcenter of $\{e_1,e_2\}$, which sits on
this line together with an additional optimality condition
specifying which point of the locus is selected, is therefore
generically not the $A$-projection of the origin onto
$\aff(B_\KK)$, and the two constructions return distinct points.
The two notions answer different questions: equidistance among
finitely many points versus single-projection feasibility
certification.
\end{remark}

\section{Concluding remarks}\label{sec:outlook}

The results of this note all derive from the orthogonality
identity $\scal{d}{u^i}=-\norm{d}^2$ of~\cref{eq:keyident}. From
it follow, in the polyhedral case, the exact admissible polyhedron
and directional-depth formula of \Cref{sec:exact}, the
inverse-Gram-matrix closed form with spectral bounds of
\Cref{sec:gram}, and the aperture identity $\norm{d}=\cos\theta$
of \Cref{sec:aperture}. The non-polyhedral extension of
\Cref{sec:nonpoly} delivers the strongest statement of the paper:
the polar description
$\mathcal{P}_\KK=\bigcap_{u\in E_\KK}\{\scal{v}{u}\le\norm{d}^2\}$
together with the contact-set identification
$\norm{d}^2\,\closu E_\KK$. The Jordan-frame value
$\norm{d}_{\!J}^2=1/r$ and the parallel-resistance formula
$1/\norm{d}^2=\sum_\ell 1/\norm{d_\ell}^2$ position the symmetric
and product cases within a unified picture, while the doubly
nonnegative cone and the obstruction at $p$-cones with $p\ne 2$
test the reach of the hypothesis beyond the symmetric setting.
The algorithmic content of \Cref{sec:step,sec:piecewise,sec:concrete}
reduces to the closed-form step-length
oracles~\cref{eq:linfsigma,eq:socsigma}, applicable in both the
polyhedral and the smooth non-polyhedral regimes. The Bregman
extension of \Cref{sec:bregman} reproduces the inscribed-ball
estimate in dual coordinates and recovers
\cite[Thm.~2.6]{BehlingEtAl_CDC} in the Euclidean case.

We close with three open questions: 

The first concerns intersections $\KK=\bigcap_{\ell=1}^L \KK_\ell$
of finitely generated cones. Writing $d_\ell$ for the
circumcentric direction of $\KK_\ell$, one may ask whether some
convex combination $d^{\,\star}=\sum_{\ell=1}^L\alpha_\ell d_\ell$
is an interior feasible direction for $\Kpolar$, and what
inscribed-ball radius around $d^{\,\star}$ one can guarantee. The
geometric coupling between blocks should enter through the
Friedrichs angles between the affine hulls
$\aff(B_{\KK_\ell})$, in contrast with the direct-product case of
\cref{prop:dirsum}, where the parallel-resistance formula
$1/\norm{d}^2=\sum_\ell 1/\norm{d_\ell}^2$ is exact because the
affine hulls are mutually orthogonal. Already the case $L=2$
raises a sharp question: is $\alpha_1=\alpha_2=\tfrac12$
asymptotically optimal as the angle between the two affine hulls
shrinks to zero? An affirmative answer would give a building-block
strategy for problems with naturally decomposable constraint
sets, including conic relaxations and dual-decomposition
formulations, and would connect to the block-wise
circumcentered-reflection scheme of Behling, Bello-Cruz, and
Santos~\cite{BBS:block} and to the recent parallel CRM and
polyhedral-projection methods~\cite{Barros:PPP2025}. 

The second concerns the algorithmic exploitation of
\cref{prop:step,thm:sharp} within a complete first-order method.
The update~\cref{eq:fcpg} is suited to problems where the
active-cone structure changes between iterates, as in piecewise
smooth constraint systems and in interior-point-like schemes that
switch active sets, and \cref{thm:sharp} pins down the worst-case
directions $w\in\closu E_\KK$ along which the inscribed-ball
margin is tight. The two concrete problem classes of
\Cref{sec:concrete} provide natural test beds: the
$L_\infty$-ball case~\cref{eq:linfprob} admits standard benchmarks
(LASSO-type and robust-regression formulations) on which the
closed-form oracle~\cref{eq:linfsigma} can be tested directly,
and the SOCP case~\cref{eq:socp} admits the CBLIB and MOSEK
problem libraries. A convergence analysis under standard
relative-Lipschitz or Polyak--{\L}ojasiewicz hypotheses, with
rates that exploit the angular bound of \cref{cor:angular} when
$w$ is close to the axis $-d/\norm{d}$, would tie the geometry
developed here to the quantitative first-order theory of
Necoara, Nesterov, and Glineur~\cite{NNG:2019} and the
convex-feasibility framework of~\cite{NPR:2019}. The Bregman
feasibility-corrected step \cref{eq:bcfg} provides the natural
mirror-descent variant of the same scheme. 

The third question concerns the geometric appeal of the
hypothesis $0\notin\closu\aff(E_\KK).$ \Cref{prop:pcone} shows
that the hypothesis is essentially a planarity condition on the
extremal section, which fails for $p$-cones with $p\ne 2$ but can
be repaired by working with a different compact base. Identifying
the broadest class of cones for which a self-dual projection
target survives, homogeneous cones with rational characteristic
exponent are a natural candidate, would extend the picture begun
here.

\bibliographystyle{siamplain}

\end{document}